\theoremstyle{plain}
\newtheorem{mainthm}{Theorem}
\newtheorem*{conj*}{Conjecture}
\newtheorem*{cor*}{Corollary}
\newtheorem{theorem}{Theorem}[section]
\newtheorem{prop}[theorem]{Proposition}
\newtheorem{proposition}[theorem]{Proposition}
\theoremstyle{definition}
\newtheorem*{def*}{Definition}
\newtheorem{remark}[theorem]{Remark}
\newtheorem{definition}[theorem]{Definition}
\newcommand{\N}{\mathbb{N}}
\newcommand{\R}{\mathbb{R}}
\newcommand{\eps}{\varepsilon}
\title[Transitivity of real Anosov diffeomorphisms]{Transitivity of real Anosov diffeomorphisms}
\author{Bernardo Carvalho}
\date{\today}
\begin{document}

\renewcommand{\thefootnote}{}

\footnote{2020 \emph{Mathematics Subject Classification}: Primary 37D20; Secondary 37C05.}

\footnote{\emph{Key words and phrases}: Anosov diffeomorphisms, stable/unstable holonomies, infinitely extented, transitivity.}

\renewcommand{\thefootnote}{\arabic{footnote}}
\setcounter{footnote}{0}

\begin{abstract}
We prove the transitivity of real Anosov diffeomorphisms, which are Anosov diffeomorphisms where stable and unstable spaces decompose into a continuous sum of invariant one-dimensional sub-spaces with uniform contraction/expansion over the ambient manifold. We prove that if a stable/unstable curve has a well-defined length in a conformal hyperbolic distance, then it has a globally defined holonomy. We exhibit a conformal hyperbolic distance with well-defined length of stable/unstable curves for each real Anosov diffeomorphism.
\end{abstract}

\maketitle

\section{Introduction}

An \emph{Anosov diffeomorphism} is a diffeomorphism $f\colon M\to M$ of a closed, connected, and smooth manifold $M$ with a Riemannian metric $\|.\|$ satisfying: 
there is an invariant splitting $TM=E^s\oplus E^u$ and constants $c>0$ and $\lambda\in(0,1)$ such that 
$$\|Df^k_{|E^s}\|\leq c\lambda^k \,\,\, \textrm{and}
      \,\,\, \|Df^{-k}_{|E^u}\|\leq c\lambda^k \,\,\,\,\,\, \text{for every} \,\,\,\,\,\, k\in\N.$$ 
The transitivity of Anosov diffeomorphisms was conjectured by Stephen Smale in \cite{S} and has been intriguing dynamicists since then. From the classical works of John Franks \cite{F1} and \cite{F2} till more recent works of Victor Kleptsyn and Yury Kudryashov \cite{KK}, the idea of proving transitivity revolves around proving that stable/unstable holonomies can be infinitely extended. To define this precisely, we first define the \emph{stable set} of $x$ by 
$$W^s(x):=\{y\in M;\;d(f^n(y), f^n(x))\to 0 \,\,\,\,\,\, \text{when} \,\,\,\,\,\, n\to\infty\}$$
and the \emph{unstable set} of $x$ by 
$$W^u(x):=\{y\in M;\;d(f^n(y), f^n(x))\to 0 \,\,\,\,\,\, \text{when} \,\,\,\,\,\, n\to-\infty\},$$ where $d$ is the distance induced by the Riemannian metric, and note that on Anosov diffeomorphisms, these sets are leaves of a pair of transversal foliations, called stable and unstable foliations (this is a consequence of the Stable Manifold Theorem \cite{HPS}). We say that stable/unstable holonomies can be \emph{infinitely extended} (or are globally defined) if for every $x\in M$, $y\in W^s(x)$, and $z\in W^u(x)$, there exists a continuous map $g\colon[0,1]\times[0,1]\to M$ such that $g(0,0)=x$, $g(0,1)=y$, $g(1,0)=z$, $g(t,.)$ is a stable curve for every $t\in[0,1]$, and $g(.,t)$ is an unstable curve for every $t\in[0,1]$.

It was proved by M. Brin in \cite{B} that stable/unstable holonomies of Anosov diffeomorphisms satisfying a pinching condition can be infinitely extended. 
This includes the case of conformal Anosov diffeomorphisms, which are the Anosov diffeomorphisms where there exists a Riemannian metric $\|.\|$ satisfying
$$\|Df(x)v^s\|=\lambda\|v^s\| \,\,\,\,\,\, \text{and} \,\,\,\,\,\, \|Df^{-1}(x)v^u\|=\lambda\|v^u\|$$
for every $v^s\in E^s(x)$, $v^u\in E^u(x)$, and $x\in M$, since in this case the pinching condition is trivially satisfied. 

The reason usually cited in the literature for the infinite extendability of stable/unstable holonomies not being proved is the lack of regularity of stable and unstable foliations. Even though stable and unstable leaves have the same regularity as the diffeomorphism, the stable and unstable foliations of Anosov diffeomorphisms are (in general) only Hölder continuous (see \cite{HPS} and \cite{PSW}), and this seems to be not enough to ensure the existence of the map $g$ as above. The idea is that stable/unstable holonomies could ``blow up'' in finite time (see Figure 5 in \cite{F1}). 

We solve this problem for real Anosov diffeomorphisms, which are Anosov diffeomorphisms where stable and unstable spaces decompose into a continuous sum of invariant one-dimensional sub-spaces with uniform contraction/expansion over the ambient manifold (see Section \ref{Real} for a precise definition). The word real comes from the similarity of its definition with the spectrum of a linear operator being real. If $p$ is a periodic point of $f$ and $\pi(p)$ denotes its period, then the spectrum of $Df^{\pi(p)}$ is, indeed, real. In the case $f$ is a Torus Anosov automorphism, it is real if, and only if, the spectrum of the hyperbolic matrix defining it is real. The following is the first main result of this article:

\begin{mainthm}\label{thmreal}
If $f\colon M\to M$ is a real Anosov diffeomorphism, then its stable/unstable holonomies can be infinitely extended and $f$ is transitive.
\end{mainthm}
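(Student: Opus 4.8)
The plan is to prove the two auxiliary statements announced in the abstract and then combine them. Call a distance $\rho$ on $M$ a \emph{conformal hyperbolic distance} if $f$ acts on each stable leaf as a uniform $\rho$-contraction and on each unstable leaf as a uniform $\rho$-expansion of a definite rate. I would first isolate the following general criterion, independent of the real hypothesis: \emph{if $\rho$ is a conformal hyperbolic distance with respect to which every stable and unstable curve has a well-defined, finite length, then the stable/unstable holonomies of $f$ are globally defined}. The heuristic is that the uniform rate prevents the ``finite-time blow up'' of holonomy depicted in Figure~5 of \cite{F1}: given $x$, $y\in W^{s}(x)$ and $z\in W^{u}(x)$, local product structure (from the Stable Manifold Theorem \cite{HPS}) produces the map $g$ on a neighbourhood of $(0,0)$ in $[0,1]^{2}$, and one extends it over larger and larger sub-rectangles by sliding along the always locally defined holonomies; the uniform $\rho$-contraction/expansion bounds the $\rho$-length of each unstable fibre $g(s,\cdot)$ by that of the stable curve $g(0,\cdot)$ up to a fixed multiplicative constant, so the fibres cannot escape to infinity, and the finiteness and continuity of $\rho$-length let one pass to the limit and close up $g$ on all of $[0,1]^{2}$.

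The core of the argument, and where the real hypothesis enters, is the construction of such a $\rho$ for every real Anosov diffeomorphism. Starting from the continuous $Df$-invariant decomposition $E^{s}=E^{s}_{1}\oplus\cdots\oplus E^{s}_{d_{s}}$ and $E^{u}=E^{u}_{1}\oplus\cdots\oplus E^{u}_{d_{u}}$ into one-dimensional sub-bundles with uniform rates supplied in Section~\ref{Real}, I would equip each line bundle $E^{s}_{i}$ with an \emph{adapted} norm, obtained from the given Riemannian norm by a Mather-type averaging $\|v\|'=\sum_{j=0}^{N-1}\|Df^{j}v\|$ with $N$ large, so that $\|Df\,v\|'\le\mu\|v\|'$ for a constant $\mu\in(0,1)$ and all $v\in E^{s}_{i}$, and symmetrically on each $E^{u}_{j}$; since each bundle is one-dimensional, $Df$ automatically scales this norm by a continuous factor, so it is ``conformal'' along that bundle. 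Summing the line norms over $i$ and over $j$ yields continuous norms on $E^{s}$ and on $E^{u}$, hence a continuous Finsler metric on $TM=E^{s}\oplus E^{u}$, and $\rho$ is its induced path distance. One then checks that $\rho$ is conformal hyperbolic and that stable/unstable curves, which lie on the $C^{k}$ leaves and, using the flag induced by the decomposition, can be reparametrised as concatenations of arcs tangent to successive sub-bundles, have finite $\rho$-length, estimated leaf by leaf from the adapted norms by a convergent geometric series in $\mu$.

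The main obstacle is precisely this last point: that stable/unstable curves have a \emph{well-defined} length that stays bounded and varies continuously as the curve moves through the (in general only H\"older) foliation --- this is the pathology that has obstructed previous attempts, and it is exactly what the real decomposition is designed to defeat, since conformality on each one-dimensional sub-bundle turns the $\rho$-length of a stable curve into a convergent series $\sum_{n\ge 0}\mu^{n}\delta_{n}$, where $\delta_{n}$ is the $\rho$-diameter of the $n$-th forward image of a fundamental domain of the curve, controlled uniformly over $M$. I expect the bulk of the technical work to be in making these estimates uniform over the manifold and in iterating them along the flag of sub-bundles, after which the limiting construction of $g$ in the general criterion applies verbatim. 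Finally, transitivity follows from the now globally defined holonomies by the classical argument: global holonomy forces $\overline{W^{u}(x)}$ to be saturated by stable leaves and to contain an open set around each of its points via local product structure, hence to equal $M$ by connectedness of $M$, so every unstable leaf is dense and the Anosov diffeomorphism $f$ is transitive.
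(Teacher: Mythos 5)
Your overall architecture matches the paper's: a general criterion (finite, well-defined stable/unstable length in a conformal hyperbolic distance implies globally defined holonomy) plus a construction of such a distance from the real decomposition. But the construction you propose for $\rho$ has a genuine gap. A Mather-type average $\|v\|'=\sum_{j=0}^{N-1}\|Df^{j}v\|$ only yields the \emph{inequality} $\|Df\,v\|'\le\mu\|v\|'$, and the scaling factor of $Df$ on each line bundle in your adapted norm is a \emph{continuous function of the base point}, not a constant. The definition of conformal hyperbolic distance used here demands an exact equality $d(f^{k}(y),f^{k}(x))=\lambda^{-k}d(y,x)$ with one fixed $\lambda$ for all stable pairs; the real hypothesis supplies exactly this, namely $\|Df(x)v\|=|a_i|\,\|v\|$ with $|a_i|$ constant over $M$. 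Moreover, since the rates $|a_1|,\dots,|a_k|$ are in general distinct, simply summing the line norms over $i$ cannot produce a quantity that $Df$ scales by a single factor: different components shrink at different rates. The paper resolves this by raising each component norm to the exponent $\log(\lambda)/\log(|a_i^{-1}|)$, so that every component scales by exactly $\lambda^{-1}$; the price is that $\rho(x,\cdot)$ is no longer a norm (it fails absolute homogeneity and the triangle inequality), and a separate argument is then needed to show that $d^s_{\rho}$, $d^u_{\rho}$ are metrics and that the length of a pw$C^1$ curve equals $\int\rho(\gamma,\gamma')\,dt$. Your Finsler path distance, built from a genuine norm, would be an adapted (Lyapunov) metric with contraction inequalities only, and that is not enough for the rest of the argument.

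The reason this matters is visible in the proof of the extension criterion itself. Exact conformality is what makes the local holonomies pseudo-isometric (Proposition \ref{confpseudo}) and, more importantly, is what allows the key renormalization step: after gluing the rectangles $g_1,\dots,g_n$, the naive estimate gives $\ell^u(g_n(\cdot,1))\le(1+\eps)^n\ell^u(\gamma^u)$, which blows up. The paper recovers the bound $(1+\eps)\ell^u(\gamma^u)$ at every stage by iterating the whole glued rectangle forward by $f^{k}$ until its stable sides have $d^s$-diameter below $\delta$, subdividing $\gamma^u$ into pieces of equal conformal length, applying the pseudo-isometry piecewise, and summing using the additivity $\ell^u(\gamma^u)=\sum_i\ell^u(\gamma^u_{[\beta_i,\beta_{i+1}]})$ --- which is where well-definedness of the length is genuinely used. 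With only a contraction inequality, iterating forward does not rescale lengths by an exact factor, the piecewise estimates do not resum to $(1+\eps)\ell^u(\gamma^u)$, and the multiplicative losses compound: this is precisely the finite-time blow-up you set out to avoid. Your concluding transitivity argument (density of unstable leaves from global holonomy plus local product structure) is fine and essentially the route the paper takes via Brin and Smale.
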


Inspired by Artigue's self-similar hyperbolic metric \cite{A}, we analyze how the stable/unstable holonomies of Anosov diffeomorphisms behave on a conformal hyperbolic distance (we use the word distance to distinguish it from a conformal Riemannian metric). The work of Artigue \cite{A} represented an important step towards the understanding of stable/unstable holonomies of Anosov diffeomorphisms (or more generally topologically hyperbolic homeomorphisms). He introduced a self-similar hyperbolic metric, based on the hyperbolic metric of Fathi \cite{F}, with conformal contractions on local stable/unstable sets and pseudo-isometric local stable/unstable holonomies (see Section \ref{Conformalself} for precise definitions). The transitivity of topologically hyperbolic homeomorphisms under an additional assumption of local stable/unstable holonomies being isometries is proved in \cite{A}*{Theorem 4.6}. There is an important difference between local stable/unstable holonomies being isometries and pseudo-isometries and, indeed, they are in general not isometries, the metric being self-similar or not (see \cite{A}*{Remark 4.5}).

Using a conformal hyperbolic distance, we obtain conditions on stable/unstable curves which ensure their holonomies are globally defined. We recall that Victor Kleptsyn and Yury Kudryashov \cite{KK} proved the existence of one unstable curve with globally defined holonomy. Their precise definition is the following: an unstable curve $\gamma^u\colon[0,1]\to M$ has \emph{globally defined holonomy} if for any stable curve $\gamma^s\colon[0,1]\to M$ with $\gamma^u(0)=\gamma^s(0)$, there exists a map $g\colon[0,1]\times[0,1]\to M$ such that $g(0,.)=\gamma^s$, $g(.,0)=\gamma^u$, $g(t,.)$ is a stable curve for every $t\in[0,1]$, and $g(.,t)$ is an unstable curve for every $t\in[0,1]$ (this can be similarly defined for a stable curve). 
The following is the second main result of this article and does not assume $f$ to be a real Anosov diffeomorphism. Indeed, it holds for every topologically hyperbolic homeomorphism on a Peano continuum (see Section \ref{Conformalself} for precise definitions).

\begin{mainthm}\label{A}
If a stable/unstable curve has a well-defined length in a conformal hyperbolic distance, then it has a globally defined holonomy.
\end{mainthm}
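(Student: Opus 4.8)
The plan is to fix an unstable curve $\gamma^u\colon[0,1]\to M$ with well-defined length $\ell$ in a fixed conformal hyperbolic distance $D$, and to build the holonomy map $g$ by "transporting" $\gamma^u$ along an arbitrary stable curve $\gamma^s\colon[0,1]\to M$ with $\gamma^s(0)=\gamma^u(0)$. First I would recall from Section \ref{Conformalself} the two structural features of $D$ that do the work: local stable/unstable holonomies are pseudo-isometries (so along short stable transitions the length of an unstable curve is controlled, up to a multiplicative constant, by the length of its image), and the contraction on local stable/unstable sets is conformal. The idea is \emph{not} to extend $\gamma^u$ all at once but to iterate: because $f$ is contracting on stable sets, for large $n$ the stable curve $f^n\circ\gamma^s$ becomes short enough to lie in a single product neighborhood, where the local unstable holonomy is defined; one transports $f^n\circ\gamma^u$ across it, obtaining an unstable curve whose $D$-length is comparable to $\ell$ (here the well-definedness of the length, i.e.\ finiteness and the absence of "blow-up", is exactly what keeps the pseudo-isometry estimate from degenerating); then one applies $f^{-n}$ to return, and the expansion of $f^{-n}$ on the unstable direction is again conformal, so lengths scale by a definite factor. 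The key point is that the transported curves stay in a compact family of curves of bounded $D$-length, so one can pass to a limit and glue the local pieces into a globally defined continuous $g$.

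Concretely, the steps in order are: (1) Using that stable curves shrink, cover $[0,1]$ by finitely many subintervals $[t_{i-1},t_i]$ such that each $f^{n}(\gamma^s|_{[t_{i-1},t_i]})$, for a uniform large $n$, has $D$-diameter below the local-product scale; this is where I use the contraction on stable sets in the conformal distance. (2) On each such piece, define the local unstable holonomy $h_i$ taking a point of $f^n(\gamma^u$-translate$)$ to the corresponding point on the unstable leaf through the other endpoint of the stable piece; the pseudo-isometry property bounds the $D$-length of $h_i$-images by a constant times the $D$-length of the source curve. (3) Compose $f^{-n}\circ h_i \circ f^n$ to get the holonomy over $[t_{i-1},t_i]$ at the original scale, and check that the image unstable curve has $D$-length $\le C\ell$ with $C$ independent of $i$ and $n$ — this uses conformality of the expansion of $f^{-n}$ on $E^u$ so that the distortion introduced by $f^n$ and $f^{-n}$ cancels up to a bounded factor. (4) Concatenate the pieces along $\gamma^s$ to obtain $g(t,\cdot)$ for $t$ in a finite net, show the resulting family of unstable curves is equicontinuous (bounded length plus the fact that unstable leaves are the integral curves of a continuous foliation, parametrized by $D$-arclength, lie in a compact set), and extract a continuous limit filling in all $t\in[0,1]$; verify $g(0,\cdot)=\gamma^s$ along the other edge by the same argument with the roles of $t$ and the curve parameter exchanged. (5) Conclude that $g$ is the required holonomy, so $\gamma^u$ has globally defined holonomy; the argument for a stable curve is symmetric by replacing $f$ with $f^{-1}$.

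The main obstacle I expect is step (3)–(4): controlling that the $D$-length of the transported unstable curves does not blow up as the stable transition varies, and that the resulting family is equicontinuous so a limit exists. This is precisely the phenomenon that prevented earlier approaches (holonomies "blowing up in finite time"), and the conformal hyperbolic distance is introduced exactly to tame it — the point is that in this distance the local holonomies are pseudo-isometries with a \emph{global} multiplicative constant, and the hypothesis that $\gamma^u$ has a well-defined (finite, non-degenerate) $D$-length is what guarantees that iterating these pseudo-isometric transitions produces curves of uniformly bounded $D$-length rather than curves whose length diverges. Making the limiting/gluing argument rigorous — in particular checking that the limit map is continuous jointly in both variables and that each slice is genuinely a stable, resp.\ unstable, curve — will require care with the parametrizations (arclength in $D$ versus the given parametrizations of $\gamma^s,\gamma^u$), but no new idea beyond compactness once the uniform length bound is in hand.
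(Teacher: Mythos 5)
Your skeleton (transport $\gamma^u$ along $\gamma^s$ in finitely many short stable steps, using the pseudo-isometric local product structure after iterating forward to shrink the stable transition, then pulling back) matches the paper's strategy, but the decisive step is missing. In your steps (2)--(4) each local crossing multiplies the unstable length by a constant $C=1+\eps$, and concatenating $m$ crossings along $\gamma^s$ gives the bound $(1+\eps)^m\ell$, which diverges; you name this as ``the main obstacle'' and assert that the well-definedness of $\ell^u(\gamma^u)$ ``is what guarantees'' uniform boundedness, but finiteness of the length by itself does nothing to stop $(1+\eps)^m\ell\to\infty$. The paper's proof resolves this with a specific re-summation argument that you do not supply: after building the concatenated rectangle over $[0,\alpha_m]$, it does \emph{not} keep the composed estimate. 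Instead it iterates the whole rectangle forward by $f^{k}$ until the total accumulated stable width fits inside a single product box, subdivides the \emph{original} curve $\gamma^u$ into finitely many pieces $\gamma^u_{[\beta_i,\beta_{i+1}]}$ whose images under $f^{k}$ have unstable length exactly $\delta$, applies the pseudo-isometry exactly once to each piece across the entire stable width, and sums. The hypothesis that $\ell^u(\gamma^u)$ is well defined enters precisely here, through the additivity $\ell^u(\gamma^u)=\sum_i\ell^u(\gamma^u_{[\beta_i,\beta_{i+1}]})$ (Proposition \ref{slength}) and the conformal scaling of $\ell^u$ under iteration (Proposition \ref{lengthconformal}); the outcome is the bound $(1+\eps)\ell^u(\gamma^u)$ at \emph{every} stage, with a single factor $1+\eps$ independent of the number of stable steps already taken. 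Without this re-estimation against the original curve, the induction cannot close.

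Two smaller points. First, your step (4) appeals to equicontinuity, compactness of families of curves, and ``integral curves of a continuous foliation parametrized by arclength''; Theorem \ref{A} is stated for topologically hyperbolic homeomorphisms on Peano continua, where no foliation or Riemannian arclength is available, and in fact no limiting argument is needed: each rectangle is given explicitly by the bracket $g(t,s)=\bigl[\gamma^u(t),\gamma^s(s)\bigr]$, which is already continuous. Second, the paper deliberately does not assume the stable curve $\gamma^s$ has well-defined length, controlling it instead through the pointwise estimate $d^s(g(t,s),g(t,0))\leq(1+\eps)d^s(\gamma^s(s),\gamma^s(0))$ summed over the increments $\delta,\lambda^{-1}\delta,\lambda^{-1}\delta,\dots$; your symmetric treatment of the two directions quietly assumes more than the theorem does.
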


The proper definition of a length of a curve in a metric space is given in Definition \ref{length}. Conformal hyperbolic distances are not necessarily induced by a Riemannian metric, so this condition on the existence of the length of a stable/unstable curve can be complicate to be proved. In the proof of Theorem \ref{thmreal}, we present a conformal hyperbolic distance for each real Anosov diffeomorphism, that is not directly induced by a Riemannian metric but has a well-defined length of stable/unstable curves. Theorem \ref{A} is used to conclude that stable/unstable holonomies are globally defined and that $f$ is transitive.

This article is organized as follows: in Section \ref{Conformalself}, we discuss basic properties of conformal hyperbolic distances; in Section \ref{infext}, we prove Theorem \ref{A}; and in Section \ref{Real}, we prove Theorem \ref{thmreal}.

\section{Stable/unstable sets are conformal}\label{Conformalself}

In this section, we discuss local stable/unstable holonomies of topologically hyperbolic homeomorphisms using a conformal hyperbolic distance. We prove that the holonomies in this case are pseudo-isometric, following a similar result of Artigue \cite{A} which uses a self-similar hyperbolic metric. We introduce the concept of a conformal structure, a distance inside each (global) stable/unstable set where $f$ acts in a conformal way, and define a conformal structure using a conformal hyperbolic distance. We also discuss the hypothesis of existence of lengths for stable/unstable curves on a conformal hyperbolic distance and how these lengths behave under the pseudo-isometric local stable/unstable holonomies.

\vspace{+0.4cm}

\hspace{-0.4cm}\textbf{Conformal hyperbolic distances}

\vspace{+0.4cm}

We begin this section by recalling basic definitions of topological dynamics and properties of hyperbolic metrics. 
Let $f\colon X\to X$ be a homeomorphism of a compact metric space $(X,d)$. 
For each $x\in X$ and $\eps>0$ we consider the \emph{$\eps$-stable set} of $x\in X$ as the set 
$$W^s_{\eps}(x):=\{y\in X; \,\, d(f^k(y),f^k(x))\leq\eps \,\,\,\, \textrm{for every} \,\,\,\, k\geq 0\}$$
and the \emph{$\eps$-unstable set} of $x$ as the set 
$$W^u_{\eps}(x):=\{y\in X; \,\, d(f^k(y),f^k(x))\leq\eps \,\,\,\, \textrm{for every} \,\,\,\, k\leq 0\}.$$
The (Bowen's) \emph{dynamical ball} of $x$ with radius $\eps$ is the set $$\Gamma_{\eps}(x)=W^u_{\eps}(x)\cap W^s_{\eps}(x).$$ 
We say that $f$ is \emph{expansive} if there exists $c>0$ such that $$\Gamma_c(x)=\{x\} \,\,\,\,\,\, \text{for every} \,\,\,\,\,\, x\in X.$$ We say that $f$ satisfies the \emph{local-product-structure} if for each $\eps>0$ there exists $\delta>0$ such that $$W^s_\eps(x)\cap W^u_\eps(y)\neq\emptyset \,\,\,\,\,\, \text{whenever} \,\,\,\,\,\, d(x,y)<\delta.$$ In some texts, the local-product-structure is called by canonical coordinates, as in \cite{A}. The expansive homeomorphisms satisfying the local-product-structure are called \emph{topologically hyperbolic}. 
For each expansive homeomorphism $f\colon X\to X$ of a compact metric space $X$, Fathi constructed a compatible metric $d_F$ and constants $\xi>0$ and $\lambda>1$ such that if $d_F(x,y)<\xi$, then
$$\max\{d_F(f(x),f(y)),d_F(f^{-1}(x),f^{-1}(y))\}\geq\lambda d_F(x,y).$$ An important feature of this metric is that points in the same local stable/unstable set contract exponentially when iterated. Thus, if $y\in W^s_{\xi}(x)$ and $z\in W^u_{\xi}(x)$, then 
$$d_F(f^k(y),f^k(x))\leq \lambda^{-k}d_F(y,x) \,\,\,\,\,\,\,\,\,\, \textrm{and}$$ $$d_F(f^{-k}(z),f^{-k}(x))\leq \lambda^{-k}d_F(z,x) \,\,\,\,\,\, \text{for every} \,\,\,\,\,\, k\geq0.$$ This is the reason this is usually called a \emph{hyperbolic metric}.
Artigue proved in \cite{A}*{Theorem 2.3} that there are a compatible metric $d$ and constants $\xi>0$ and $\lambda>1$ such that if $d(x,y)<\xi$, then $$\max\{d(f(x),f(y)),d(f^{-1}(x),f^{-1}(y))\}=\lambda d(x,y)$$ with an equality instead of an inequality. A hyperbolic metric with this property is called \emph{self-similar}. Artigue proved \cite{A}*{Proposition 2.12} that a self-similar hyperbolic metric is, in particular, a conformal hyperbolic distance, that is, points in the same local stable/unstable set contract exponentially with an equality instead of an inequality. More precisely, $d$ is a \emph{conformal hyperbolic distance} if there exist constants $\xi>0$ and $\lambda>1$ such that if $y\in W^s_{\xi}(x)$ and $z\in W^u_{\xi}(x)$, then
$$d(f^k(y),f^k(x))=\lambda^{-k}d(y,x) \,\,\,\,\,\,\,\, \textrm{and}$$ 
$$d(f^{-k}(z),f^{-k}(x))=\lambda^{-k}d(z,x) \,\,\,\,\,\, \text{for every} \,\,\,\,\,\, k\geq0.$$ We say that $\xi$ is an \emph{expansive constant} and that $\lambda$ is an \emph{expanding factor} of $d$.

\vspace{+0.4cm}

\hspace{-0.4cm}\textbf{Conformal structures}

\vspace{+0.4cm}

In what follows, we explain how to use a conformal hyperbolic distance $d$ to define distances on each (global) stable/unstable set where points contract conformally (with an equality instead of an inequality) when iterated. 

\begin{definition}
A \emph{conformal structure} $(d^s,d^u)$ for a homeomorphism $f\colon X\to X$ is a pair of maps $d^s\colon X^s\to\R^+$ and $d^u\colon X^u\to\R^+$, where 
$$X^s=\{(y,z)\in X\times X; y\in W^s(z)\} \,\,\,\,\,\, \text{and} \,\,\,\,\,\, X^u=\{(y,z)\in X\times X; y\in W^u(z)\},$$ satisfying: 
\begin{enumerate}
\item $d^s$ restricted to $W^s(x)\times W^s(x)$ and $d^u$ restricted to $W^u(x)\times W^u(x)$ are metrics for every $x\in X$ and
\item there exists $\lambda>1$ such that if $y\in W^s(x)$ and $z\in W^u(x)$, then
$$d^s(f(y),f(x))=\lambda^{-1}d^s(y,x) \,\,\,\,\,\,\,\, \textrm{and} \,\,\,\,\,\,\,\, d^u(f^{-1}(z),f^{-1}(x))=\lambda^{-1}d^u(z,x).$$  
\end{enumerate}
\end{definition}

This definition does not assume the existence of a conformal hyperbolic distance and does not assume that $f$ is topologically hyperbolic, it defines an abstract object, the conformal structure, that will allow us to obtain information on stable/unstable holonomies. We construct below a conformal structure for each topologically hyperbolic homeomorphism using a conformal hyperbolic distance $d$.

\begin{definition}
We consider the map $d^s\colon X^s\to\R^+$ defined by
$$d^s(y,z)=\lambda^{n(y,z)}d(f^{n(y,z)}(y),f^{n(y,z)}(z)),$$
where $n(y,z)$ is the first iterate $n\in\N\cup\{0\}$ where $f^n(z)\in W^s_{\xi}(f^n(y))$, $\lambda>1$ is the expanding factor and $\xi$ is the expansive constant of the metric $d$. 
\end{definition}


\begin{remark}\label{0}
If $z\in W^s_{\xi}(y)$, then $n(y,z)=0$ and $d^s(y,z)=d(y,z)$, that is, $d^s=d$ on $\xi$-stable sets.
\end{remark}

\begin{remark}\label{first}
We note that the choice of the first iterate is not important for the definition of $d^s$ since for each $k\geq n(y,z)$ we have
\begin{eqnarray*}
d^s(y,z)&=&\lambda^{n(y,z)}d(f^{n(y,z)}(y),f^{n(y,z)}(z))\\
&=& \lambda^{k}\lambda^{-k+n(y,z)}d(f^{n(y,z)-k}(f^k(y)),f^{n(y,z)-k}(f^k(z)))\\
&=& \lambda^{k}\lambda^{-k+n(y,z)}\lambda^{-n(y,z)+k}d(f^{k}(y),f^{k}(z))\\
&=&\lambda^{k}d(f^{k}(y),f^{k}(z)).
\end{eqnarray*}
\end{remark}

\begin{proposition}\label{distance}
The map $d^s$ restricted to $W^s(x)\times W^s(x)$ is a metric for every $x\in X$.
\end{proposition}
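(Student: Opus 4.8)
The plan is to verify the metric axioms for $d^s$ on $W^s(x)\times W^s(x)$: non-negativity, the identity of indiscernibles, symmetry, and the triangle inequality. Fix $x\in X$ and let $y,z,w\in W^s(x)$. First I would observe that by definition $W^s(x) = W^s(y) = W^s(z) = W^s(w)$, so any two of these points lie in a common stable set and the quantities $n(y,z)$, etc., are well-defined finite non-negative integers (here I use that $f^n$ eventually brings the two forward orbits within $\xi$ of each other in $d$, which is exactly the content of ``$y\in W^s(z)$'' together with the hyperbolicity of $d$). Non-negativity is immediate since $d\geq 0$ and $\lambda>0$. Symmetry follows because $n(y,z)=n(z,y)$ (the condition $f^n(z)\in W^s_\xi(f^n(y))$ is symmetric in $y,z$, since $W^s_\xi$ is defined by the symmetric condition $d(f^k(\cdot),f^k(\cdot))\leq\xi$ for all $k\geq 0$) and $d$ itself is symmetric.

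For the identity of indiscernibles: if $y=z$ then $n(y,z)=0$ and $d^s(y,z)=d(y,y)=0$; conversely if $d^s(y,z)=0$ then $d(f^{n(y,z)}(y),f^{n(y,z)}(z))=0$, so $f^{n(y,z)}(y)=f^{n(y,z)}(z)$ since $d$ is a metric, and applying $f^{-n(y,z)}$ gives $y=z$. The main work is the triangle inequality $d^s(y,w)\leq d^s(y,z)+d^s(z,w)$. The key device is Remark \ref{first}: for any $k$ large enough (namely $k\geq\max\{n(y,z),n(z,w),n(y,w)\}$), we have simultaneously
$$d^s(y,z)=\lambda^k d(f^k(y),f^k(z)),\quad d^s(z,w)=\lambda^k d(f^k(z),f^k(w)),\quad d^s(y,w)=\lambda^k d(f^k(y),f^k(w)).$$
Then the triangle inequality for $d^s$ reduces, after dividing by $\lambda^k>0$, to the triangle inequality for $d$ applied to the three points $f^k(y),f^k(z),f^k(w)$, which holds since $d$ is a metric on $X$.

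The only genuine obstacle is confirming that a common iterate $k$ bounding all three of $n(y,z)$, $n(z,w)$, $n(y,w)$ actually works in Remark \ref{first} — i.e., that Remark \ref{first}'s identity $d^s(a,b)=\lambda^k d(f^k(a),f^k(b))$ is valid for \emph{all} $k\geq n(a,b)$, not just $k=n(a,b)$ — but this is precisely what Remark \ref{first} establishes (once $f^{n(a,b)}(b)\in W^s_\xi(f^{n(a,b)}(a))$, conformality of $d$ on $\xi$-stable sets propagates the identity to every larger iterate). A minor point worth stating explicitly is that $n(y,w)$ could in principle exceed $\max\{n(y,z),n(z,w)\}$, but this causes no trouble: I simply take $k=\max\{n(y,z),n(z,w),n(y,w)\}$ from the outset. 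With that, all four axioms are checked and $d^s$ restricted to $W^s(x)\times W^s(x)$ is a metric.
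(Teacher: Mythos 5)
Your proposal is correct and follows essentially the same route as the paper: symmetry via $n(y,z)=n(z,y)$, and the triangle inequality by passing to a common iterate $k\geq\max\{n(y,z),n(z,w),n(y,w)\}$, invoking Remark \ref{first} to rewrite all three quantities as $\lambda^k d(f^k(\cdot),f^k(\cdot))$, and then applying the triangle inequality for $d$. Your additional remarks (finiteness of $n(\cdot,\cdot)$ and the explicit check of the identity of indiscernibles) only make explicit what the paper leaves implicit.
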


\begin{proof}
Indeed, $d^s(y,z)\geq0$, with equality if, and only if, $y=z$; $d^s(y,z)=d^s(z,y)$ since $n(y,z)=n(z,y)$; and the triangle inequality is proved as follows: for each $w,y,z\in W^s(x)$ choose $k\in\N$ such that 
$$k\geq\max\{n(w,y), n(y,z), n(w,z)\}$$
and note that Remark \ref{first} and the fact that $d$ is a distance ensure that
\begin{eqnarray*}
d^s(w,z)&=& \lambda^{k}d(f^{k}(w),f^{k}(z))\\
&\leq& \lambda^{k}d(f^{k}(w),f^{k}(y))+\lambda^{k}d(f^{k}(y),f^{k}(z))\\
&=&d^s(w,y)+d^s(y,z).
\end{eqnarray*}
This concludes the proof.
\end{proof}

In the following proposition, we prove that $f$ acts in a conformal way on stable sets in the distance $d^s$.

\begin{proposition}\label{conformal}
If $y\in W^s(x)$, then 
$$d^s(f^k(x),f^k(y))=\lambda^{-k}d^s(x,y) \,\,\,\,\,\, \text{for every} \,\,\,\,\,\, k\geq0.$$    
\end{proposition}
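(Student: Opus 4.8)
The plan is to reduce the statement for arbitrary $k\geq 0$ to the computation already encoded in Remark \ref{first}. The key observation is that $d^s(x,y)$ is defined by pushing the pair $(x,y)$ forward until it lands inside a $\xi$-stable set and then reading off the $d$-distance there, scaled by the appropriate power of $\lambda$; and by Remark \ref{first} any iterate $m\geq n(x,y)$ may be used in place of $n(x,y)$ in this formula. So I would fix $y\in W^s(x)$, let $n=n(x,y)$, and fix $k\geq 0$.

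First I would treat $d^s(f^k(x),f^k(y))$. Since $f^k(y)\in W^s(f^k(x))$, this quantity is well defined, and it equals $\lambda^{m}d(f^{m+k}(x),f^{m+k}(y))$ for any $m\geq n(f^k(x),f^k(y))$. But $f^n(y)\in W^s_\xi(f^n(x))$ trivially gives $f^{n}(f^k(y)) = f^{n+k}(y)\in W^s_\xi(f^{n+k}(x))$ once $n+k\geq 0$ — which always holds — so we may take $m$ to be, say, $\max\{n, n+k\}$ or more simply any integer $\ell \geq 0$ with $\ell \geq n$; then $\ell$ works as an admissible exponent for \emph{both} $d^s(x,y)$ (via Remark \ref{first}) and $d^s(f^k(x),f^k(y))$ (using the iterate $\ell-k$, valid once $\ell \geq n$, hence $\ell - k \geq n - k$, and $f^{n-k}$ applied to $(f^k x, f^k y)$ lands in the $\xi$-stable set — more carefully, pick $\ell\geq\max\{n,n+k\}$ so $\ell-k\geq n\geq 0$ is a valid iterate in the definition for $f^k(x),f^k(y)$). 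With such an $\ell$ in hand,
\begin{eqnarray*}
d^s(f^k(x),f^k(y)) &=& \lambda^{\ell-k} d\bigl(f^{\ell-k}(f^k(x)), f^{\ell-k}(f^k(y))\bigr)\\
&=& \lambda^{\ell-k} d\bigl(f^{\ell}(x), f^{\ell}(y)\bigr)\\
&=& \lambda^{-k}\,\lambda^{\ell} d\bigl(f^{\ell}(x), f^{\ell}(y)\bigr)\\
&=& \lambda^{-k} d^s(x,y),
\end{eqnarray*}
where the last equality is Remark \ref{first} applied with the iterate $\ell\geq n(x,y)$. This is exactly the claimed identity.

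The only point requiring care — and the step I expect to be the mildest obstacle — is bookkeeping with the iterate indices: one must make sure the exponent $\ell$ chosen is simultaneously $\geq n(x,y)$ and such that $\ell-k$ is a legitimate (nonnegative, and past the first-entry time) iterate for the pair $(f^k(x),f^k(y))$. Taking $\ell=\max\{n(x,y),\,n(x,y)+k,\,n(f^k(x),f^k(y))+k\}$ handles every case at once, and then both applications of Remark \ref{first} are licensed. Everything else is the algebra of powers of $\lambda$ displayed above. Note the argument uses only the defining formula for $d^s$ and Remark \ref{first}; it does not need Proposition \ref{distance}, and it is symmetric in the two entries, so the same computation applies verbatim to $d^u$ and $f^{-1}$.
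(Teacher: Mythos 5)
Your argument is correct and rests on the same ingredients as the paper's proof (the first-entry-time definition of $d^s$ and the conformal scaling of $d$ on $\xi$-stable sets, packaged as Remark \ref{first}); the only difference is organizational, in that the paper splits into the cases $k\leq n(x,y)$ (where it uses $n(f^k(x),f^k(y))=n(x,y)-k$ directly) and $k>n(x,y)$ (where it invokes Remark \ref{0}), whereas you choose a single common iterate $\ell\geq\max\{n(x,y)+k,\,n(f^k(x),f^k(y))+k\}$ and apply Remark \ref{first} to both pairs at once, exactly as the paper itself does in the triangle-inequality step of Proposition \ref{distance}. Your index bookkeeping is sound, so this is a valid and slightly more uniform rendering of the same proof.
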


\begin{proof}
Note that
$$n(f^k(x),f^k(y))=n(x,y)-k \,\,\,\,\,\, \text{for every} \,\,\,\,\,\, k\in\{0,\dots,n(x,y)\}$$
and, consequently,
\begin{eqnarray*}
d^s(f^k(x),f^k(y))&=& \lambda^{n(f^k(x),f^k(y))}d(f^{n(f^k(x),f^k(y))}(f^k(x)),f^{n(f^k(x),f^k(y))}(f^k(y)))\\
&=& \lambda^{n(x,y)-k}d(f^{n(x,y)}(x),f^{n(x,y)}(y))\\
&=&\lambda^{-k}d^s(x,y)
\end{eqnarray*}
for every $k\in\{0,\dots,n(x,y)\}$. If $k>n(x,y)$, then
\begin{eqnarray*}
d^s(f^k(x),f^k(y))&=& d^s(f^{k-n(x,y)}(f^{n(x,y)}(x)),f^{k-n(x,y)}(f^{n(x,y)}(y)))\\
&=&\lambda^{n(x,y)-k}d^s(f^{n(x,y)}(x),f^{n(x,y)}(y))\\
&=&\lambda^{-k}d^s(x,y).
\end{eqnarray*}
The second equality follows from Remark \ref{0} noting that 
$$f^{k-n(x,y)}(f^{n(x,y)}(x))\in W^s_{\xi}(f^{k-n(x,y)}(f^{n(x,y)}(y))).$$ This completes the proof.
\end{proof}

The same discussion can be done for unstable sets. In what follows, detailed explanations and proofs will be omitted since they are similar to the stable case.

\begin{definition}
We consider a map $d^u\colon X^u\to\R^+$ defined by
$$d^u(y,z)=\lambda^{n(y,z)}d(f^{-n(y,z)}(y),f^{-n(y,z)}(z)),$$
where $n(y,z)$ is the first iterate $n\in\N\cup\{0\}$ where $f^{-n}(z)\in W^u_{\xi}(f^{-n}(y))$. We use the same notation $n(y,z)$ for both stable and unstable cases and hope this causes no confusion to the reader. 
\end{definition}

As in Propositions \ref{distance} and \ref{conformal}, $(W^u(x),d^u)$ is a metric space where $f^{-1}$ acts in a conformal way. This is condensed in the following result.

\begin{proposition}\label{uconf}
The map $d^u$ is a metric in $W^u(x)\times W^u(x)$ and if $y\in W^u(x)$, then 
$$d^u(f^{-k}(x),f^{-k}(y))=\lambda^{-k}d^u(x,y) \,\,\,\,\,\, \text{for every} \,\,\,\,\,\, k\geq0.$$  
\end{proposition}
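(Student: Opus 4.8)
The plan is to mirror the stable case verbatim, since the unstable construction is obtained from the stable one by replacing $f$ with $f^{-1}$ and $W^s_\xi$ with $W^u_\xi$. First I would observe that $d$ being a conformal hyperbolic distance for $f$ is equivalent to $d$ being one for $f^{-1}$: the defining condition is symmetric in $f$ and $f^{-1}$ (the $\max$ of the two forward/backward distance changes), and the contraction statement for $f^{-1}$ on $W^u_\xi$ sets is exactly the second half of the definition. Moreover $W^u(x)$ for $f$ equals $W^s(x)$ for $f^{-1}$, and $W^u_\xi(x)$ for $f$ equals $W^s_\xi(x)$ for $f^{-1}$. Hence the map $d^u$ defined for $f$ is literally the map $d^s$ defined for the homeomorphism $f^{-1}$ with the same expanding factor $\lambda$ and expansive constant $\xi$.

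With that identification in hand, the proof is immediate: Proposition \ref{distance} applied to $f^{-1}$ gives that $d^u$ restricted to $W^u(x)\times W^u(x)$ is a metric, and Proposition \ref{conformal} applied to $f^{-1}$ gives, for $y\in W^u(x)$,
$$d^u((f^{-1})^k(x),(f^{-1})^k(y))=\lambda^{-k}d^u(x,y)\quad\text{for every }k\geq0,$$
which is exactly $d^u(f^{-k}(x),f^{-k}(y))=\lambda^{-k}d^u(x,y)$. So the cleanest write-up just invokes the two earlier propositions for $f^{-1}$ and notes the dictionary $W^s\leftrightarrow W^u$, $W^s_\xi\leftrightarrow W^u_\xi$.

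For completeness I would also spell out the analogues of Remarks \ref{0} and \ref{first} in the unstable setting, since they are what make the two cited propositions applicable: if $z\in W^u_\xi(y)$ then $n(y,z)=0$ and $d^u(y,z)=d(y,z)$; and for any $k\geq n(y,z)$ one has $d^u(y,z)=\lambda^k d(f^{-k}(y),f^{-k}(z))$, by the same telescoping computation as in Remark \ref{first} with $f$ replaced by $f^{-1}$. These are routine and I would state them in a single sentence rather than redo the display.

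There is essentially no obstacle here; the only thing to be careful about is the direction of the iterates — writing $f^{-k}$ throughout and making sure the conformal identity comes out as $\lambda^{-k}$ (contraction under the backward iteration $f^{-1}$) rather than $\lambda^{k}$. Since the excerpt explicitly says detailed proofs in the unstable case are omitted because they are similar to the stable case, the expected write-up is one short paragraph: "Apply Propositions \ref{distance} and \ref{conformal} to the homeomorphism $f^{-1}$, using that $W^u(x)$, $W^u_\xi(x)$, and the conformal hyperbolic distance $d$ for $f$ are, respectively, $W^s(x)$, $W^s_\xi(x)$, and a conformal hyperbolic distance for $f^{-1}$."
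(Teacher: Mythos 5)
Your proposal is correct and matches the paper's intent exactly: the paper omits the proof of Proposition \ref{uconf} precisely because it is the stable-case argument (Propositions \ref{distance} and \ref{conformal}) applied to $f^{-1}$, with the dictionary $W^u \leftrightarrow W^s$, $W^u_\xi \leftrightarrow W^s_\xi$ that you spell out. Your observation that the definition of a conformal hyperbolic distance is symmetric under $f \leftrightarrow f^{-1}$ is the one point worth stating explicitly, and you do.
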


All above results prove the following:

\begin{theorem}
The pair $(d^s,d^u)$ is a conformal structure for $f$.
\end{theorem}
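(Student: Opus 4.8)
The plan is to assemble this as an immediate corollary of the work already done, checking the two clauses of the definition of a conformal structure against the propositions just proved. First I would recall what must be verified: (1) $d^s$ restricted to $W^s(x)\times W^s(x)$ and $d^u$ restricted to $W^u(x)\times W^u(x)$ are metrics for every $x\in X$, and (2) there is a constant $\lambda>1$ such that $d^s(f(y),f(x))=\lambda^{-1}d^s(y,x)$ whenever $y\in W^s(x)$, and symmetrically $d^u(f^{-1}(z),f^{-1}(x))=\lambda^{-1}d^u(z,x)$ whenever $z\in W^u(x)$.

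For clause (1), the stable half is exactly Proposition \ref{distance}, and the unstable half is the first assertion of Proposition \ref{uconf}; nothing new is needed. For clause (2), I would take $\lambda$ to be the expanding factor of the underlying conformal hyperbolic distance $d$, which is $>1$ by definition. Applying Proposition \ref{conformal} with $k=1$ gives $d^s(f(x),f(y))=\lambda^{-1}d^s(x,y)$ for $y\in W^s(x)$, which is precisely the stable identity in clause (2) (using symmetry of $d^s$ to match the order of arguments). Likewise, applying the second assertion of Proposition \ref{uconf} with $k=1$ gives $d^u(f^{-1}(x),f^{-1}(y))=\lambda^{-1}d^u(x,y)$ for $y\in W^u(x)$, which is the unstable identity. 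Hence $(d^s,d^u)$ satisfies both conditions and is a conformal structure for $f$.

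There is essentially no obstacle here: the theorem is a bookkeeping statement collecting Propositions \ref{distance}, \ref{conformal}, and \ref{uconf}, together with the observation that a single $\lambda$ — the expanding factor of $d$ — serves for both the stable and unstable conformality relations. The only point worth a sentence of care is that the definition of a conformal structure is stated abstractly (it does not presuppose a conformal hyperbolic distance or topological hyperbolicity), so one should note explicitly that the constructed $d^s,d^u$ are genuinely defined on all of $X^s$ and $X^u$ respectively — this uses that $f$ is topologically hyperbolic, so that for any $y\in W^s(z)$ the iterate $n(y,z)$ with $f^n(z)\in W^s_\xi(f^n(y))$ exists and is finite, and symmetrically on the unstable side. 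With that remark in place, the proof reduces to citing the three propositions above. \qed
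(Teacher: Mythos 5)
Your proposal is correct and matches the paper exactly: the paper's proof is the single line ``A direct consequence of Propositions \ref{distance}, \ref{conformal}, and \ref{uconf},'' which is precisely the bookkeeping you carry out. Your extra remark on the finiteness of $n(y,z)$ is fine, though it needs only expansiveness and the definition of $W^s(z)$ (orbits converging forces $f^n(z)\in W^s_\xi(f^n(y))$ eventually), not the local-product-structure.
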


\begin{proof}
A direct consequence of Propositions \ref{distance}, \ref{conformal}, and \ref{uconf}.
\end{proof}

\vspace{+0.4cm}

\hspace{-0.4cm}\textbf{Pseudo-isometric local stable/unstable holonomies}

\vspace{+0.4cm}

All the above results hold for every expansive homeomorphism since they always have a self-similar hyperbolic metric, which as noted before is conformal. Assuming, in addition, that $f$ is topologically hyperbolic, an important feature of the self-similar hyperbolic metric is that local stable/unstable holonomies are pseudo-isometric. The local-product-structure ensures the existence of $\delta_0\in(0,\xi)$ and a map $\big{[}.,.\big{]}$ defined by 
$$\big{[}x,y\big{]}=W^s_{\xi}(x)\cap W^u_{\xi}(y) \,\,\,\,\,\, \text{whenever} \,\,\,\,\,\, d(x,y)<\delta_0.$$ This map is well-defined since $f$ is expansive and, consequently, $\big{[}x,y\big{]}$ is a singleton. It is proved in \cite{A}*{Theorem 2.21} that for each $\eps\in(0,\xi)$, there exists $\delta\in(0,\delta_0)$ such that
$$\max\left\{d(\big{[}x,y\big{]},x), d(\big{[}x,y\big{]},y)\right\}<(1+\eps)d(x,y)$$
whenever $d(x,y)<\delta$ (see also \cite{A}*{Remark 2.22}). For each $z\in X$, let
$$C_{\xi}(z)=\left[W^s_{\xi}(z),W^u_{\xi}(z)\right]=\{[x,y]; \,\,\, x\in W^u_{\xi}(z) \,\,\, \text{and} \,\,\, y\in W^s_{\xi}(z)\}.$$
The set $C_{\xi}(z)$ is called the \emph{product box} centered at $z$ and radius $\xi$. 
The sets 
$$\left[\{x\},W^s_{\xi}(z)\right] \,\,\,\,\,\, \text{and} \,\,\,\,\,\, \left[W^u_{\xi}(z),\{y\}\right]$$
are called the \emph{stable/unstable plaques} of the product box and we consider the natural local stable/unstable holonomy maps $\pi^s$ and $\pi^u$ between each pair of stable/unstable plaques in $C_{\xi}(z)$. It is proved in \cite{A}*{Lemma 4.3} that when $d$ is a self-similar hyperbolic metric, the local stable/unstable holonomy maps are \emph{pseudo-isometric}. This means that for each $\eps\in(0,\xi)$, there exists $\delta\in(0,\eps)$ such that if $a,b\in \left[\{x\},W^s_{\xi}(z)\right]$ satisfy $d(a,b)<\delta$, then 
$$(1-\eps)d(a,b)\leq d(\pi^s(a),\pi^s(b))\leq(1+\eps)d(a,b),$$ and analogously, if $p,q\in \left[W^u_{\xi}(z),\{y\}\right]$ satisfy $d(p,q)<\delta$, then 
$$(1-\eps)d(p,q)\leq d(\pi^u(p),\pi^u(q))\leq(1+\eps)d(p,q).$$
If a conformal structure $(d^s,d^u)$ is defined from a self-similar hyperbolic metric $d$, then we can change $d$ by $d^s$ and $d^u$ in the respective above inequalities, that is, the stable/unstable holonomies in a product box are pseudo-isometric with respect to $d^s$ and $d^u$. This is just a consequence of the fact that $d=d^s$ on local stable sets and $d=d^u$ on local unstable sets (see Remark \ref{0}).

It is also possible to obtain pseudo-isometric stable/unstable holonomies using a conformal hyperbolic distance, without the assumption of the existence of a self-similar hyperbolic metric. The following result is based in \cite{A}*{Lemma 4.3}.

\begin{prop}\label{confpseudo}
If $f\colon X\to X$ is a topologically hyperbolic homeomorphism of a compact metric space and $d$ is a conformal hyperbolic distance for $f$, then the local product boxes of $f$ are pseudo-isometric.
\end{prop}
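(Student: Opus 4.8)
The plan is to adapt Artigue's argument for the self-similar case (\cite{A}*{Lemma 4.3}) by replacing the exact self-similarity equation with the conformal contraction property on local stable/unstable sets. Fix a product box $C_\xi(z)$ and consider two points $a,b$ on a stable plaque $\left[\{x\},W^s_\xi(z)\right]$; their images $\pi^s(a),\pi^s(b)$ lie on another stable plaque $\left[\{x'\},W^s_\xi(z)\right]$, and $\pi^s(a)\in W^u_\xi(a)$, $\pi^s(b)\in W^u_\xi(b)$. The strategy is to iterate forward: since $a,b$ and their images are all in a common local stable set (after identifying via the product structure, $a$ and $\pi^s(a)$ are joined by a short unstable segment), under $f^k$ the stable distances $d(f^k(a),f^k(b))$ and $d(f^k(\pi^s(a)),f^k(\pi^s(b)))$ both shrink by exactly $\lambda^{-k}$ by the conformal hyperbolic property, while the unstable ``jump'' distances $d(f^k(a),f^k(\pi^s(a)))$ grow — so one instead iterates forward until one can compare them directly, or more precisely one controls the holonomy by expressing $\pi^s(a),\pi^s(b)$ as brackets and using the pseudo-isometry of the bracket map $[\cdot,\cdot]$ quoted from \cite{A}*{Theorem 2.21}.

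Concretely, first I would reduce to the following: given $\eps\in(0,\xi)$, find $\delta$ so that for $a,b$ on a stable plaque with $d(a,b)<\delta$ and $\pi^s$ the holonomy onto a nearby stable plaque, we have $(1-\eps)d(a,b)\le d(\pi^s(a),\pi^s(b))\le (1+\eps)d(a,b)$. Write $a' = \pi^s(a) = [a,y]$ and $b'=\pi^s(b)=[b,y]$ for the appropriate $y$ on the target unstable plaque. The key estimate is to compare $d(a',b')$ with $d(a,b)$ using that $a'\in W^s_\xi(a)$-side comparisons contract under $f^k$ while $b'$ comparisons do too, choosing $k$ large enough that $f^k$ brings everything into a scale where the bracket estimate $\max\{d([p,q],p),d([p,q],q)\}<(1+\eps')d(p,q)$ applies with a tiny $\eps'$, then pulling back by $f^{-k}$. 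Since $a,b$ lie on a common local stable set, $d(f^k(a),f^k(b))=\lambda^{-k}d(a,b)$ exactly, and similarly on the target plaque $d(f^k(a'),f^k(b'))=\lambda^{-k}d(a',b')$; meanwhile $f^k(a')=[f^k(a),f^k(y)]$-type relations let one sandwich $d(f^k(a'),f^k(b'))$ between $(1\pm\eps')d(f^k(a),f^k(b))$ via the triangle inequality and the bracket pseudo-isometry, and multiplying through by $\lambda^k$ gives the claim. The unstable case is symmetric, iterating $f^{-1}$ instead.

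The main obstacle I anticipate is bookkeeping the two competing scales: the stable direction contracts under forward iteration but the unstable ``height'' of the holonomy jump expands, so one cannot simply iterate forever — one must choose the number of iterates $k=k(\eps)$ carefully so that the jump is still within the domain $\delta_0$ where the bracket map is defined and pseudo-isometric, while simultaneously the stable separation has shrunk enough for the $(1+\eps')$ bracket estimate to translate into a $(1+\eps)$ bound after scaling back. This forces the choice of $\delta$ (the allowed initial separation $d(a,b)$) to depend on $k$, hence on $\eps$, in a way that has to be made explicit; getting the quantifiers in the right order is the delicate point. A secondary technical nuisance is that, unlike in the self-similar case, we only have the conformal equality on genuine local stable/unstable sets (within $\xi$), so one must verify at each step that the iterated points remain in the relevant $\xi$-local sets, which again constrains $k$ and $\delta$. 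Once these constants are pinned down, the estimates themselves are routine applications of the triangle inequality together with Propositions already established and \cite{A}*{Theorem 2.21}.
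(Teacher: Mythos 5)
Your overall strategy---rescale via the conformal equalities until the comparison can be made at a controlled scale, then pull back---is the right one in spirit, but you iterate in the wrong direction, and this breaks the key estimate rather than merely complicating the bookkeeping. In your configuration $a,b$ lie in a common local \emph{stable} set while the holonomy jumps $a\mapsto\pi^s(a)$ run along local \emph{unstable} sets. Under $f^k$ with $k>0$ the two separations you want to compare, $d(a,b)$ and $d(\pi^s(a),\pi^s(b))$, both contract by exactly $\lambda^{-k}$ (so nothing is gained by iterating: their ratio is unchanged), while the error terms---the transversal jumps $d(a,\pi^s(a))$ and $d(b,\pi^s(b))$, which a priori are only bounded by $2\xi$---\emph{expand} by $\lambda^{k}$. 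Hence the relative error $\bigl(d(f^k(a),f^k(\pi^s(a)))+d(f^k(b),f^k(\pi^s(b)))\bigr)/d(f^k(a),f^k(b))$ grows like $\lambda^{2k}$, and there is no admissible choice of $k\ge 0$. The bracket estimate of \cite{A}*{Theorem 2.21} cannot rescue this: it bounds $d([p,q],p)$ by $(1+\eps')d(p,q)$ where the relevant $d(p,q)$ is the inter-plaque (jump) distance, of order $\xi$, not the separation $d(a,b)$; the resulting additive error is $O(\eps'\xi)$, which is not $O(\eps\,d(a,b))$ for any $\eps'$ chosen independently of $d(a,b)$. Moreover, forward iteration pushes the relevant points out of, not into, the domain where the bracket is defined.

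The paper's proof runs the rescaling the other way and needs no bracket estimate at all. For each pair $p,q$ on a common stable plaque it picks $m=m(p,q)$ with $\xi\lambda^{-m-1}<\max\{d(p,q),d(\pi^u(p),\pi^u(q))\}\le\xi\lambda^{-m}$ and iterates \emph{backward} $m$ times (the displayed $f^{m}$ in that computation plays the role of $f^{-m}$; only then do the exponents balance against the conformal identities). Backward iteration expands the stable separations by exactly $\lambda^{m}$, keeping them $\le\xi$ by the choice of $m$, and contracts the unstable jumps by exactly $\lambda^{-m}$; the reverse triangle inequality then gives $|d(p,q)-d(\pi^u(p),\pi^u(q))|\le\lambda^{-2m}\cdot 2\xi<2\lambda^{-m+1}\max\{d(p,q),d(\pi^u(p),\pi^u(q))\}$, which is an $\eps$-relative error once $\delta$ is chosen so that $m(p,q)\ge m_0$, and \cite{A}*{Lemma 4.2} converts this into the two-sided pseudo-isometry. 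So the fixes your argument needs are: reverse the direction of iteration (backward for the holonomy along unstable leaves between stable plaques, forward for the symmetric case), let the number of iterates depend on the pair $(a,b)$ rather than only on $\eps$, and replace the appeal to \cite{A}*{Theorem 2.21} by the reverse triangle inequality applied at the rescaled time.
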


\begin{proof}
Let $C_{\xi}(z)$ be any local product box of $f$ and consider an unstable holonomy map $\pi^u$ between any pair of unstable plaques $\left[\{x\},W^s_{\xi}(z)\right]$ and $\left[\{y\},W^s_{\xi}(z)\right]$ (the case of stable holonomy is analogous). For each $\eps\in(0,\xi)$, we choose $m_0\in\N$ such that $$\frac{2}{\lambda^{m_0-1}-2}<\eps.$$ For each $p,q\in \left[\{x\},W^s_{\xi}(z)\right]$ let $m=m(p,q)\in\N$ be such that 
$$\xi\lambda^{-m-1}<\max\{d(p,q),d(\pi^u(p),\pi^u(q))\}\leq\xi\lambda^{-m}.$$
Now choose $\delta\in(0,\eps)$ such that $m(p,q)\geq m_0$ whenever $d(p,q)<\delta$. 
Using the metric properties of $d$ and its conformal contractions on local stable/unstable sets, we obtain:
\begin{eqnarray*}
|d(p,q)-d(\pi^u(p),\pi^u(q))|&=&\lambda^{-m}|d(f^m(p),f^m(q))-d(f^m(\pi^u(p)),f^m(\pi^u(q)))|\\
&\leq&\lambda^{-m}(d(f^m(p),f^m(\pi^u(p)))+d(f^m(q),f^m(\pi^u(q))))\\
&=&\lambda^{-2m}(d(p,\pi^u(p))+d(q,\pi^u(q)))\\
&\leq&\lambda^{-2m}2\xi\\
&<&\lambda^{-m+1}2\max\{d(p,q),d(\pi^u(p),\pi^u(q))\}.
\end{eqnarray*}
Assume first that $$\max\{d(p,q),d(\pi^u(p),\pi^u(q))\}=d(\pi^u(p),\pi^u(q)).$$ 
Thus,
$$\left|1-\frac{d(p,q)}{d(\pi^u(p),\pi^u(q))}\right|<\lambda^{-m+1}2<\eps$$
and \cite{A}*{Lemma 4.2} ensures that
$$\left|1-\frac{d(\pi^u(p),\pi^u(q))}{d(p,q)}\right|<\frac{2}{\lambda^{m-1}-2}\leq\frac{2}{\lambda^{m_0-1}-2}<\eps.$$
In the case $$\max\{d(p,q),d(\pi^u(p),\pi^u(q))\}=d(p,q),$$ 
we have
$$\left|1-\frac{d(\pi^u(p),\pi^u(q))}{d(p,q)}\right|<\lambda^{-m+1}2<\eps$$
and \cite{A}*{Lemma 4.2} ensures that
$$\left|1-\frac{d(p,q)}{d(\pi^u(p),\pi^u(q))}\right|<\frac{2}{\lambda^{m-1}-2}\leq\frac{2}{\lambda^{m_0-1}-2}<\eps.$$
In both cases we have, in particular, that
$$|d(p,q)-d(\pi^u(p),\pi^u(q))|<\eps d(p,q)$$
and it follows that
\begin{eqnarray*}
d(\pi^u(p),\pi^u(q))&=&d(\pi^u(p),\pi^u(q))-d(p,q)+d(p,q)\\
&<&\eps d(p,q)+d(p,q)\\
&=&(1+\eps)d(p,q).
\end{eqnarray*}
The inequality $d^u(\pi^u(p),\pi^u(q))>(1-\eps)d^u(p,q)$ is proved in a similar way and this finishes the proof.
\end{proof}

\vspace{+0.4cm}

\hspace{-0.4cm}\textbf{Conformal stable/unstable lengths} 

\vspace{+0.4cm}

In the proof of Theorem \ref{A}, we will need to define the lengths of stable/unstable curves on general Peano continua. We can do this using the conformal structure $(d^s,d^u)$ as follows. We first observe that since $f$ is topologically hyperbolic and $X$ is a Peano continuum, that is a compact, connected, and locally connected metric space, it follows that every stable/unstable set is path connected and locally path-connected (see \cite{N}*{Theorem 4.1}).

\begin{definition}\label{length}
Let $\gamma^s\colon[a,b]\to X$ be a stable curve and $P=\{a=a_0<a_1<\dots<a_{n-1}<a_n=b\}$ be a partition of $[a,b]$. Let
$$\ell^s(\gamma^s,P)=\sum_{i=0}^{n-1}d^s(\gamma^s(a_{i+1}),\gamma^s(a_i))$$
and define the length of the curve $\gamma^s$ as
$$\ell^s(\gamma^s)=\sup_{P}\{\ell^s(\gamma^s,P)\},$$
where the supremum is taken over all partitions of $[a,b]$. We say that $\gamma^s$ has a \emph{well-defined stable length} if $0<\ell^s(\gamma^s)<+\infty$.
\end{definition}





The following proposition contains an important property we obtain from $\ell^s$ being a length.

\begin{proposition}\label{slength}
If $\gamma^s\colon[a,b]\to X$ is a stable curve with well-defined stable length and $c\in(a,b)$, then
$$\ell^s(\gamma^s)=\ell^s(\gamma^s_{[a,c]})+\ell^s(\gamma^s_{[c,b]}).$$
\end{proposition}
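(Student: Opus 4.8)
The plan is to prove the standard additivity of arc length for the length functional $\ell^s$ built from the metric $d^s$, the only subtlety being bookkeeping with partitions and the refinement argument.

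First I would fix $c\in(a,b)$ and compare partitions of $[a,b]$ with pairs of partitions of $[a,c]$ and $[c,b]$. For the inequality $\ell^s(\gamma^s)\le \ell^s(\gamma^s_{[a,c]})+\ell^s(\gamma^s_{[c,b]})$, take any partition $P$ of $[a,b]$ and let $P'=P\cup\{c\}$ be its refinement obtained by inserting the point $c$. The triangle inequality for $d^s$ on $W^s(\gamma^s(a))$ (Proposition~\ref{distance}) gives $\ell^s(\gamma^s,P)\le\ell^s(\gamma^s,P')$: replacing the single subinterval of $P$ that straddles $c$ by its two halves can only increase the sum, since $d^s(\gamma^s(a_{i+1}),\gamma^s(a_i))\le d^s(\gamma^s(a_{i+1}),\gamma^s(c))+d^s(\gamma^s(c),\gamma^s(a_i))$. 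Now $P'$ splits as $P_1\cup P_2$ where $P_1$ is a partition of $[a,c]$ and $P_2$ of $[c,b]$, and $\ell^s(\gamma^s,P')=\ell^s(\gamma^s_{[a,c]},P_1)+\ell^s(\gamma^s_{[c,b]},P_2)\le\ell^s(\gamma^s_{[a,c]})+\ell^s(\gamma^s_{[c,b]})$. Taking the supremum over $P$ yields the desired bound; this also shows $\ell^s(\gamma^s_{[a,c]})$ and $\ell^s(\gamma^s_{[c,b]})$ are finite, being dominated by $\ell^s(\gamma^s)<\infty$.

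For the reverse inequality $\ell^s(\gamma^s)\ge\ell^s(\gamma^s_{[a,c]})+\ell^s(\gamma^s_{[c,b]})$, let $\eps>0$ and choose partitions $P_1$ of $[a,c]$ and $P_2$ of $[c,b]$ with $\ell^s(\gamma^s_{[a,c]},P_1)>\ell^s(\gamma^s_{[a,c]})-\eps$ and $\ell^s(\gamma^s_{[c,b]},P_2)>\ell^s(\gamma^s_{[c,b]})-\eps$ (each supremum is approached since both lengths are finite). Their union $P=P_1\cup P_2$ is a partition of $[a,b]$ containing $c$, and directly $\ell^s(\gamma^s,P)=\ell^s(\gamma^s_{[a,c]},P_1)+\ell^s(\gamma^s_{[c,b]},P_2)>\ell^s(\gamma^s_{[a,c]})+\ell^s(\gamma^s_{[c,b]})-2\eps$. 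Hence $\ell^s(\gamma^s)\ge\ell^s(\gamma^s,P)>\ell^s(\gamma^s_{[a,c]})+\ell^s(\gamma^s_{[c,b]})-2\eps$, and letting $\eps\to0$ finishes it. Combining the two inequalities gives the equality.

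The only thing to be careful about — and the closest thing to an obstacle — is making sure the restrictions $\gamma^s_{[a,c]}$ and $\gamma^s_{[c,b]}$ are again stable curves (so that Definition~\ref{length} applies to them) and that all the points $\gamma^s(a_i)$, $\gamma^s(c)$ lie in a common stable set where $d^s$ is genuinely a metric; both are immediate since a sub-curve of a stable curve is a stable curve and all its points lie in the single leaf $W^s(\gamma^s(a))$, on which $d^s$ is a metric by Proposition~\ref{distance}. Everything else is the routine refinement-and-supremum argument above. A completely parallel statement and proof hold for unstable curves with $\ell^u$ and $d^u$.
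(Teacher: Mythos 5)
Your proof is correct and follows essentially the same route as the paper: the paper also observes that inserting $c$ into a partition can only increase the partition sum (by the triangle inequality for $d^s$), so that the supremum over all partitions equals the supremum over partitions containing $c$, and then identifies the latter with $\sup(A)+\sup(B)$ where $A$ and $B$ are the partition sums of the two restricted curves. Your two-inequality write-up is just a more explicit rendering of the paper's one-line identity $\sup(A+B)=\sup(A)+\sup(B)$.
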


\begin{proof}
The proof follows classical arguments of real analysis (see for example \cite{E}*{Theorem 2, Chapter IX} for more details). We begin the proof by letting 
$$A=\{\ell^s(\gamma^s_{[a,c]},P); P \,\,\, \text{is a partition of} \,\,\, [a,c]\}$$ and $$B=\{\ell^s(\gamma^s_{[c,b]},Q); Q \,\,\, \text{is a partition of} \,\,\, [c,b]\}.$$ Thus, 
$$A+B=\{\ell^s(\gamma^s,R); R \,\,\, \text{is a partition of} \,\,\, [a,b] \,\,\, \text{containing $c$}\}$$ and
$$\ell^s(\gamma^s)=\sup(A+B)=\sup(A)+\sup(B)=\ell^s(\gamma^s_{[a,c]})+\ell^s(\gamma^s_{[c,b]}).$$ The first equality follows from the following observation: for any partition $P$ of $[a,b]$ we can add $c$ to $P$ and obtain a new partition $P'$ such that $\ell^s(\gamma^s,P')\geq\ell^s(\gamma^s,P)$, so the supremum over all partitions equals the supremum over the partitions containing $c$.
\end{proof}

The conformal properties of $d^s$ on stable sets also hold using the stable length.

\begin{proposition}\label{slengthconformal}
If $\gamma^s\colon[a,b]\to X$ is a stable curve with well-defined stable length, then
$$\ell^s(f^k(\gamma^s))=\lambda^{-k}\ell^s(\gamma^s) \,\,\,\,\,\, \text{for every} \,\,\,\,\,\, k\in\N.$$
\end{proposition}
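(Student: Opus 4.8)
\textbf{Proof plan for Proposition \ref{slengthconformal}.}

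The plan is to reduce the statement to the case $k=1$ and then iterate, since $f^k(\gamma^s)$ is again a stable curve (with well-defined stable length, by the argument below) and the general equality follows by induction on $k$ once the $k=1$ case is established. So I would focus on proving $\ell^s(f(\gamma^s)) = \lambda^{-1}\ell^s(\gamma^s)$.

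First I would observe that $f$ induces a bijection between partitions of $[a,b]$ used for $\gamma^s$ and partitions used for $f\circ\gamma^s$: indeed $(f\circ\gamma^s)(t) = f(\gamma^s(t))$, so the same partition $P = \{a = a_0 < a_1 < \cdots < a_n = b\}$ of the interval serves for both curves. The key input is Proposition \ref{conformal}: for any two points $y, z \in W^s(x)$ we have $d^s(f(y), f(z)) = \lambda^{-1} d^s(y,z)$ (applying the case $k=1$ of that proposition, noting both $\gamma^s(a_i)$ and $\gamma^s(a_{i+1})$ lie in a common stable set since $\gamma^s$ is a stable curve, hence so do their images under $f$). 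Therefore, term by term,
\begin{eqnarray*}
\ell^s(f\circ\gamma^s, P) &=& \sum_{i=0}^{n-1} d^s\big((f\circ\gamma^s)(a_{i+1}), (f\circ\gamma^s)(a_i)\big)\\
&=& \sum_{i=0}^{n-1} d^s\big(f(\gamma^s(a_{i+1})), f(\gamma^s(a_i))\big)\\
&=& \sum_{i=0}^{n-1} \lambda^{-1} d^s\big(\gamma^s(a_{i+1}), \gamma^s(a_i)\big)\\
&=& \lambda^{-1}\, \ell^s(\gamma^s, P).
\end{eqnarray*}
Taking the supremum over all partitions $P$ of $[a,b]$ on both sides, and using that $\sup_P \lambda^{-1}\ell^s(\gamma^s,P) = \lambda^{-1}\sup_P \ell^s(\gamma^s,P)$ since $\lambda^{-1} > 0$ is a fixed constant, yields $\ell^s(f\circ\gamma^s) = \lambda^{-1}\ell^s(\gamma^s)$. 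In particular $0 < \ell^s(f\circ\gamma^s) < +\infty$, so $f\circ\gamma^s$ has well-defined stable length and the induction step may be repeated, giving $\ell^s(f^k(\gamma^s)) = \lambda^{-k}\ell^s(\gamma^s)$ for every $k\in\N$.

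I do not anticipate a serious obstacle here: the statement is essentially the "length version" of the pointwise conformality in Proposition \ref{conformal}, and the only mild subtlety is the bookkeeping point that every pair of sample points on a stable curve lies in a single stable set, so that Proposition \ref{conformal} applies to each summand — this is immediate from the definition of a stable curve. One could alternatively invoke the additivity from Proposition \ref{slength} to cut $\gamma^s$ into small pieces lying in common $\xi$-stable sets and use Remark \ref{0} to replace $d^s$ by $d$ there, but the direct partition argument above is cleaner and avoids needing to localize.
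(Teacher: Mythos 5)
Your proposal is correct and follows essentially the same route as the paper: apply the conformality of $d^s$ from Proposition \ref{conformal} term by term over a partition and then take the supremum over all partitions. The only cosmetic difference is that you reduce to $k=1$ and induct, whereas the paper applies the $k$-fold identity $d^s(f^k(y),f^k(z))=\lambda^{-k}d^s(y,z)$ directly in one step.
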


\begin{proof}
If $P=\{a=a_0<a_1<\dots<a_{n-1}<a_n=b\}$ is a partition of $[a,b]$, then 
\begin{eqnarray*}
\ell^s(f^k(\gamma^s),P)&=&\sum_{i=0}^{n-1} d^s(f^k(\gamma^s(a_{i})),f^k(\gamma^s(a_{i+1})))\\
&=&\lambda^{-k}\sum_{i=0}^{n-1} d^s(\gamma^s(a_{i}),\gamma^s(a_{i+1}))\\
&=&\lambda^{-k}\ell^s(\gamma^s,P).
\end{eqnarray*}
The proposition follows by taking the supremum over all partitions $P$ of $[a,b]$ on both sides of this equality.
\end{proof}


We can analogously define the length $\ell^u$ of unstable curves as follows:

\begin{definition}
Let $\gamma^u\colon[a,b]\to X$ be an unstable curve and $P=\{a=a_0<a_1<\dots<a_{n-1}<a_n=b\}$ be a partition of $[a,b]$. Let
$$\ell^u(\gamma^u,P)=\sum_{i=0}^{n-1}d^u(\gamma^u(a_{i+1}),\gamma^u(a_i))$$
and define the length of the curve $\gamma^u$ as
$$\ell^u(\gamma^u)=\sup_{P}\{\ell^u(\gamma^u,P)\},$$
where the supremum is taken over all partitions of $[a,b]$. We say that $\gamma^u$ has a \emph{well-defined unstable length} if $0<\ell^u(\gamma^u)<+\infty$.
\end{definition}

As in Propositions \ref{slength} and \ref{slengthconformal}, we obtain length and conformal properties for $\ell^u$. This is condensed in the following result.

\begin{proposition}\label{lengthconformal}
If $\gamma^u\colon[a,b]\to X$ is an unstable curve with well-defined unstable length, then
$$\ell^u(f^{-k}(\gamma^u))=\lambda^{-k}\ell^u(\gamma^u) \,\,\,\,\,\, \text{for every} \,\,\,\,\,\, k\in\N$$
and if $c\in(a,b)$, then
$$\ell^u(\gamma^u)=\ell^u(\gamma^u_{[a,c]})+\ell^u(\gamma^u_{[c,b]}).$$
\end{proposition}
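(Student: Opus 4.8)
The plan is to reduce Proposition~\ref{lengthconformal} to the stable statements already proved, rather than redoing the arguments from scratch. The excerpt itself signals this: it says ``as in Propositions~\ref{slength} and \ref{slengthconformal}.'' So first I would observe that the whole setup for unstable sets is the mirror image of the stable one under the substitution $f \leftrightarrow f^{-1}$: the definition of $d^u$ uses $f^{-n}$ where $d^s$ uses $f^n$, the roles of $W^u$ and $W^s$ are swapped, and Proposition~\ref{uconf} is the exact analogue of Propositions~\ref{distance} and \ref{conformal}. Since an unstable curve of $f$ is a stable curve of $f^{-1}$, and $d^u$ for $f$ coincides with $d^s$ for $f^{-1}$ (with the same expanding factor $\lambda$, because $f^{-1}$ is also topologically hyperbolic with the same conformal hyperbolic distance $d$ and the same constants $\xi,\lambda$), Propositions~\ref{slength} and \ref{slengthconformal} applied to $f^{-1}$ give both assertions at once.

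If instead one prefers a self-contained argument, I would simply transcribe the two stable proofs. For the conformal property, given a partition $P = \{a = a_0 < \dots < a_n = b\}$ of $[a,b]$, one computes
\[
\ell^u(f^{-k}(\gamma^u), P) = \sum_{i=0}^{n-1} d^u\bigl(f^{-k}(\gamma^u(a_i)), f^{-k}(\gamma^u(a_{i+1}))\bigr) = \lambda^{-k} \sum_{i=0}^{n-1} d^u\bigl(\gamma^u(a_i), \gamma^u(a_{i+1})\bigr) = \lambda^{-k} \ell^u(\gamma^u, P),
\]
using Proposition~\ref{uconf} termwise, and then takes the supremum over all partitions $P$ of $[a,b]$ on both sides; this yields $\ell^u(f^{-k}(\gamma^u)) = \lambda^{-k}\ell^u(\gamma^u)$. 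For the additivity, one sets $A$ to be the set of $\ell^u(\gamma^u_{[a,c]}, P)$ over partitions $P$ of $[a,c]$ and $B$ the analogous set for $[c,b]$, notes that $A + B$ is exactly the set of $\ell^u(\gamma^u, R)$ over partitions $R$ of $[a,b]$ that contain $c$, and uses that inserting $c$ into any partition only increases the sum (triangle inequality for $d^u$, which holds by Proposition~\ref{uconf}), so the supremum over all partitions equals the supremum over those containing $c$; then $\ell^u(\gamma^u) = \sup(A+B) = \sup(A) + \sup(B) = \ell^u(\gamma^u_{[a,c]}) + \ell^u(\gamma^u_{[c,b]})$.

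There is essentially no obstacle here: the proposition is a formal bookkeeping exercise once Proposition~\ref{uconf} is in hand, exactly parallel to the stable case. The only point that deserves a word of care is the well-definedness hypothesis $0 < \ell^u(\gamma^u) < +\infty$ when splitting: one needs that each restriction $\gamma^u_{[a,c]}$ and $\gamma^u_{[c,b]}$ also has finite length, which is immediate because each is bounded above by $\ell^u(\gamma^u)$, and strictly positive because $\gamma^u$ is a non-constant curve on each sub-interval (a stable/unstable curve being, by convention, injective or at least non-constant on subintervals — if the paper's convention allows degeneracy one should restrict to a non-degenerate subinterval, but for the intended application in Theorem~\ref{A} the curves are genuine arcs). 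I would therefore write the proof in two sentences: state that both claims follow ``as in Propositions~\ref{slength} and \ref{slengthconformal}'' by replacing $f$ with $f^{-1}$, $d^s$ with $d^u$, and $W^s$ with $W^u$ throughout, invoking Proposition~\ref{uconf} in place of Propositions~\ref{distance} and \ref{conformal}.
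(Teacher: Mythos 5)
Your proposal is correct and follows exactly the route the paper intends: the paper gives no separate proof of Proposition \ref{lengthconformal}, stating only that it follows ``as in Propositions \ref{slength} and \ref{slengthconformal},'' and your transcription of those two arguments with $f$ replaced by $f^{-1}$, $d^s$ by $d^u$, and Proposition \ref{uconf} supplying the conformality and triangle inequality is precisely that omitted verification. Your side remark on the positivity of the restricted lengths is a reasonable extra precaution but does not change the argument.
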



\begin{definition}
We say that a conformal structure $(d^s,d^u)$ induces a \emph{conformal length structure} if every stable/unstable curve has a well-defined stable/unstable length $\ell^s/\ell^u$.
\end{definition}

We conclude this subsection using a conformal length structure to obtain pseudo-isometric local stable/unstable holonomies inside s/u-rectangles.

\begin{definition} A \emph{s/u-rectangle} is a continuous map $g\colon[a,b]\times[c,d]\to X$ such that $g(t,.)$ is a stable curve for every $t\in[a,b]$, $g(.,s)$ is an unstable curve for every $s\in[c,d]$, and 
$$g(t,s)=g(t,.)\cap g(.,s) \,\,\,\,\,\, \text{for every} \,\,\,\,\,\, (t,s)\in[a,b]\times[c,d].$$ 
\end{definition}

Thus, stable/unstable holonomies can be infinitely extended if for every $x\in X$, $y\in W^s(x)$, and $z\in W^u(x)$, there exists a s/u-rectangle $g\colon[0,1]\times[0,1]\to X$ such that $g(0,0)=x$, $g(0,1)=y$, and $g(1,0)=z$. When the curves $g(t,.)$ and $g(.,s)$ are local stable/unstable curves, we call $g$ a local s/u-rectangle.

When a local stable/unstable curve has a well-defined stable/unstable length and is a plaque of a local s/u-rectangle, then all the local stable/unstable curves of this rectangle also have well-defined lengths, which are pseudo-isometrically related. This is the content of the following result. 

\begin{proposition}$($\emph{Pseudo-isometric local s/u-rectangles:}$)$\label{Pi}
For each $\eps\in(0,\xi)$, there exists $\delta\in(0,\eps)$ such that if $\gamma^u\colon[0,1]\to M$ is an unstable curve with well-defined unstable length satisfying $\ell^u(\gamma^u)\leq\delta$ and $\gamma^s\colon[0,1]\to M$ is a stable curve satisfying $\gamma^s(0)=\gamma^u(0)$ and
$$d^s(\gamma^s(0),\gamma^s(t))\leq\delta \,\,\,\,\,\, \text{for every} \,\,\,\,\,\, t\in[0,1],$$ then there exists a s/u-rectangle $g\colon[0,1]\times[0,1]\to X$ such that 
$$g(0,.)=\gamma^s, \,\,\,\,\,\, g(.,0)=\gamma^u,$$
$$(1-\eps)\ell^u(\gamma^u)\leq\ell^u(g(.,t))\leq(1+\eps)\ell^u(\gamma^u) \,\,\,\,\,\, \text{for every} \,\,\,\,\,\, t\in[0,1], \,\,\,\,\,\, \text{and}$$
$$d^s(g(t,s),g(t,0))\leq(1+\eps)d^s(\gamma^s(s),\gamma^s(0)) \,\,\,\,\,\, \text{for every} \,\,\,\,\,\, (t,s)\in[0,1]\times[0,1].$$
\end{proposition}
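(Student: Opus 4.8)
The plan is to build the s/u-rectangle $g$ by iterating backwards, doing the construction in the very small scale where the local product box and Proposition \ref{confpseudo} apply, and then pulling the result forward. First I would fix $\eps\in(0,\xi)$ and use Proposition \ref{confpseudo} to choose $\delta_1\in(0,\eps)$ so small that any unstable holonomy $\pi^u$ between unstable plaques of a product box $C_\xi(z)$ is $(1+\eps/2)$-pseudo-isometric (in both the $d$ and, by Remark \ref{0}, the $d^s$/$d^u$ senses) on pairs of points at distance less than $\delta_1$; I would also shrink $\delta_1$ past the $\delta_0$ coming from the local-product-structure so that $[x,y]$ is defined whenever the two points are $\delta_1$-close. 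Then I would pick $N\in\N$ with $\lambda^{-N}\xi$ comfortably below $\delta_1$ and set $\delta:=\lambda^{-N}\delta_1$ (further shrunk if needed so $\delta<\eps$). The point of the backward iteration is the conformality built in Section 2: by Proposition \ref{slengthconformal} the unstable curve $f^{-N}(\gamma^u)$ has length $\lambda^{-N}\ell^u(\gamma^u)\le\lambda^{-N}\delta<\delta_1$, and similarly $f^{-N}(\gamma^s)$ stays $d^s$-close (here one needs $\gamma^s$ to remain in a single $\xi$-stable set so that $f^{-N}$ does not expand it — this is where the smallness hypothesis $d^s(\gamma^s(0),\gamma^s(t))\le\delta$ combined with $d^s=d$ on $\xi$-stable sets is used; iterating $f^{-1}$ on a local \emph{stable} curve expands by $\lambda$, so after $N$ steps $f^{-N}(\gamma^s)$ has $d^s$-diameter $\le\lambda^N\delta=\delta_1$, still small enough to live inside one product box).

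Next, with both $f^{-N}(\gamma^u)$ and $f^{-N}(\gamma^s)$ sitting inside a single local product box $C_\xi(z)$ with $z=f^{-N}(\gamma^u(0))$, I would \emph{define} the rectangle there by the bracket map: set $\tilde g(t,s)=[\,f^{-N}(\gamma^u)(t),\,f^{-N}(\gamma^s)(s)\,]$ — i.e., $\tilde g(t,s)$ is the unique point of $W^s_\xi(f^{-N}\gamma^u(t))\cap W^u_\xi(f^{-N}\gamma^s(s))$. One checks $\tilde g(t,0)=f^{-N}(\gamma^u)(t)$ and $\tilde g(0,s)=f^{-N}(\gamma^s)(s)$, that $\tilde g(\cdot,s)$ is an unstable curve (it is the image of $f^{-N}\gamma^u$ under the stable holonomy onto the plaque through $f^{-N}\gamma^s(s)$) and $\tilde g(t,\cdot)$ a stable curve, that these intersect in the single point $\tilde g(t,s)$ by expansiveness, and that $\tilde g$ is continuous — this last point uses that the bracket map is continuous (a standard consequence of expansiveness plus local-product-structure, exactly as in \cite{A}). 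Then I set $g:=f^N\circ\tilde g$; this is a genuine s/u-rectangle on $[0,1]^2$ with $g(0,\cdot)=\gamma^s$ and $g(\cdot,0)=\gamma^u$.

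It remains to verify the two metric estimates. For the unstable-length estimate: each $\tilde g(\cdot,s)$ is obtained from $\tilde g(\cdot,0)=f^{-N}(\gamma^u)$ by a stable holonomy inside $C_\xi(z)$, which by our choice of $\delta_1$ is $(1+\eps/2)$-pseudo-isometric; feeding this through the Riemann-sum definition of $\ell^u$ (Definition \ref{length}, applied to partitions fine enough that consecutive points are $\delta_1$-close, which is legitimate since $\tilde g(\cdot,0)$ has length $<\delta_1$) gives $(1-\eps/2)\ell^u(f^{-N}\gamma^u)\le\ell^u(\tilde g(\cdot,s))\le(1+\eps/2)\ell^u(f^{-N}\gamma^u)$; then apply Proposition \ref{lengthconformal} to multiply through by $\lambda^N$, obtaining the claimed inequality with $\eps/2<\eps$. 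For the stable estimate: fixing $t$, the stable curve $s\mapsto\tilde g(t,s)$ is the image of $f^{-N}(\gamma^s)$ under an unstable holonomy inside the product box, which is $(1+\eps/2)$-pseudo-isometric for $d^s$ by Proposition \ref{confpseudo} together with Remark \ref{0}, so $d^s(\tilde g(t,s),\tilde g(t,0))\le(1+\eps/2)\,d^s(f^{-N}\gamma^s(s),f^{-N}\gamma^s(0))$; multiplying by $\lambda^N$ and using Proposition \ref{conformal} on both sides converts this back to the bound at scale $N=0$, giving $d^s(g(t,s),g(t,0))\le(1+\eps)d^s(\gamma^s(s),\gamma^s(0))$.

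\textbf{Main obstacle.} I expect the delicate point to be the bookkeeping of scales: one must choose $N$ and $\delta$ so that \emph{simultaneously} $f^{-N}(\gamma^u)$ is short enough and $f^{-N}(\gamma^s)$ — whose $d^s$-size grows by $\lambda^N$ under backward iteration — is still small enough that both fit in one product box where Proposition \ref{confpseudo} gives the pseudo-isometry constant $1+\eps/2$; it is the asymmetry (the unstable curve contracts backward while the stable curve expands backward) that forces the two-parameter choice $\delta=\lambda^{-N}\delta_1$ with $N$ large. A secondary technical nuisance is justifying that the pseudo-isometric holonomy estimate, which is stated pointwise for $\delta_1$-close pairs, upgrades to an estimate on lengths and on the full curve $\gamma^s$ (not just $\xi$-close sub-pieces) — this is where Proposition \ref{slength} (additivity of length) and the passage to arbitrarily fine partitions are essential, and one must be careful that every sub-curve involved has its endpoints $\delta_1$-close so that the holonomy estimate genuinely applies on each piece.
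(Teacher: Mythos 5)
Your construction is correct and, at its core, identical to the paper's: the rectangle is $g(t,s)=\big[\gamma^u(t),\gamma^s(s)\big]$, well-defined because $d(\gamma^u(t),\gamma^s(s))\leq \ell^u(\gamma^u_{[0,t]})+d^s(\gamma^s(0),\gamma^s(s))\leq 2\delta<\delta_0$ already at the original scale, and both estimates follow from the pseudo-isometric holonomies of Proposition \ref{confpseudo} fed through the partition/supremum definition of the lengths, exactly as in the paper. The only divergence is your conjugation by $f^{-N}$, which is redundant but harmless: the hypotheses $\ell^u(\gamma^u)\leq\delta$ and $d^s(\gamma^s(0),\gamma^s(t))\leq\delta$ already place both curves inside one product box where Proposition \ref{confpseudo} applies directly, and by conformality of $d^s$, $d^u$, and $\ell^u$ the backward--forward iteration cancels exactly, so your final constants agree with the paper's.
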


\begin{proof}
For each $\eps\in(0,\delta_0/2)$, let $\delta\in(0,\eps)$ be given by the pseudo-isometric local stable/unstable holonomies proved in Proposition \ref{confpseudo}. 
For each $(t,s)\in[0,1]\times[0,1]$ we have
\begin{eqnarray*}
d(\gamma^u(t),\gamma^s(s))&\leq& d^u(\gamma^u(t),\gamma^u(0))+d^s(\gamma^s(0),\gamma^s(s))\\
&\leq&\ell^u(\gamma^u_{[0,t]})+d^s(\gamma^s(0),\gamma^s(s))\\
&\leq&2\delta\\
&<&\delta_0.
\end{eqnarray*}
Thus, the map $g\colon[0,1]\times[0,1]\to X$ defined by $$g(t,s)=\left[\gamma^u(t),\gamma^s(s)\right]$$
is well-defined and is a local s/u-rectangle.
For each $t\in[0,1]$ and each partition $P=\{0=a_0<a_1<\dots<a_{n-1}<a_n=1\}$ of $[0,1]$ we have
\begin{eqnarray*}
\ell^u(g(.,t),P)&=&\sum_{i=0}^{n-1}d^u(g(a_{i},t),g(a_{i+1},t))\\
&\leq&\sum_{i=0}^{n-1}(1+\eps)d^u(g(a_{i},t)^u,g(a_{i+1},t)^u)\\
&=&(1+\eps)\ell^u(\gamma^u,Q),
\end{eqnarray*} 
where $Q=\{0=b_0<b_1<\dots<b_{n-1}<b_n=1\}$ is such that 
$$\gamma^u(b_i)=g(a_i,t)^u \,\,\,\,\,\, \text{for every} \,\,\,\,\,\, i\in\{0,\dots,n\}.$$
Since $\ell^u(\gamma^u,Q)\leq\ell^u(\gamma^u)$ for every partition $Q$, it follows by taking the supremum over all partitions $P$ of $[0,1]$ that $$\ell^u(g(.,t))\leq(1+\eps)\ell^u(\gamma^u).$$
Also, a similar argument ensures that
$$\ell^u(g(.,t),P)\geq\sum_{i=0}^{n-1}(1-\eps)d^u(g(a_{i},t)^u,g(a_{i+1},t)^u)=(1-\eps)\ell^u(\gamma^u,Q)$$
and we conclude that 
$$(1-\eps)\ell^u(\gamma^u)\leq\ell^u(g(.,t)).$$
The inequality for $d^s$ follows directly from the pseudo-isometric local-product-structure.
\end{proof}

This finishes our discussion about conformal hyperbolic distances and the local stable/unstable holonomies.

\vspace{+0.4cm}

\section{Stable/unstable holonomies can be infinitely extended}\label{infext}


In this section, we prove that stable/unstable curves with well-defined lengths in a conformal hyperbolic distance have globally defined holonomies. In the proof, we will use the properties of the distances $d^s$, $d^u$ and the lengths $\ell^s$ and $\ell^u$ without referring to them every time they are used.
With this observation, we are ready to prove Theorem \ref{A}.

\begin{proof}[Proof of Theorem \ref{A}]
Let $f\colon M\to M$ be a topologically hyperbolic homeomorphism  
and $d$ be a conformal hyperbolic distance with expansive constant $\xi\in(0,1)$ and expanding factor $\lambda>1$. Choose $\eps\in(0,\xi)$, such that $\lambda^{-1}(1+\eps)<1$ and
$\delta\in(0,\eps)$ given by Proposition \ref{Pi} for this $\eps$. Assume that $\gamma^u\colon[0,1]\to M$ is an unstable curve with well-defined unstable length $$0<\ell^u(\gamma^u)<+\infty$$ and let $\gamma^s\colon[0,1]\to M$ be any stable curve such that $\gamma^s(0)=\gamma^u(0)$. We can assume that $\ell^u(\gamma^u))\leq\delta$ by iterating backwards if necessary.
Let $\alpha_1>0$ be such that $d^s(\gamma^s(0),\gamma^s(\alpha_1))=\delta$ and
$$d^s(\gamma^s(0),\gamma^s(t))\leq\delta \,\,\,\,\,\, \text{for every} \,\,\,\,\,\, t\in[0,\alpha_1].$$ 
The local-product-structure ensures the existence of a s/u-rectangle $g_1\colon[0,1]\times[0,1]\to M$ such that 
$$g_1(0,.)=\gamma^s_{[0,\alpha_1]}, \,\,\,\,\,\, g_1(.,0)=\gamma^u, \,\,\,\,\,\, \ell^u(g_1(.,1))\leq(1+\eps)\ell^u(\gamma^u), \,\,\,\,\,\,\text{and}$$
$$d^s(g_1(t,s),g_1(t,0))<(1+\eps)d^s(\gamma^s(s\alpha_1),\gamma^s(0)) \,\,\,\,\,\, \text{for every} \,\,\,\,\,\, (t,s)\in[0,1]\times[0,1].$$
By iterating this rectangle by $f^{-1}$ we obtain a s/u rectangle $\widetilde{g_1}=f^{-1}\circ g_1\colon [0,1]\times[0,1]\to M$ satisfying: 
\begin{eqnarray*}
\ell^u(\widetilde{g}_1(.,1))&=&\lambda^{-1}\ell^u(g_1(.,1))\\
&\leq&\lambda^{-1}(1+\eps)\ell^u(\gamma^u)\\
&<&\delta.
\end{eqnarray*}
Let $\alpha_2>\alpha_1$ be such that $d^s(\gamma^s(\alpha_1),\gamma^s(\alpha_2))=\lambda^{-1}\delta$
and
$$d^s(\gamma^s(\alpha_1),\gamma^s(t))\leq\lambda^{-1}\delta \,\,\,\,\,\, \text{for every} \,\,\,\,\,\, t\in[\alpha_1,\alpha_2].$$ 
Note that
$$d^s(f^{-1}(\gamma^s(\alpha_1)),f^{-1}(\gamma^s(t)))\leq\delta \,\,\,\,\,\, \text{for every} \,\,\,\,\,\, t\in[\alpha_1,\alpha_2].$$ Then there exists a s/u-rectangle $\widetilde{g}_2\colon[0,1]\times[0,1]\to M$ such that 
$$\widetilde{g}_2(0,.)=f^{-1}\circ\gamma^s_{[\alpha_1,\alpha_2]}, \,\,\,\,\,\, \widetilde{g}_2(.,0)=\widetilde{g}_1(.,1), \,\,\,\,\,\, \ell^u(\widetilde{g}_2(.,1))\leq(1+\eps)\ell^u(\widetilde{g}_1(.,1)),$$
and for each $(t,s)\in[0,1]\times[0,1]$ we have
$$d^s(\widetilde{g}_2(t,s),\widetilde{g}_2(t,0))<(1+\eps)d^s(f^{-1}(\gamma^s((1-s)\alpha_1+s\alpha_2)),f^{-1}(\gamma^s(\alpha_1))).$$ 
The s/u rectangle $g_2\colon[0,1]\times[0,1]\to M$ defined by $g_2=f\circ\widetilde{g}_2$ satisfies:
$$g_2(0,.)=\gamma^s_{[\alpha_1,\alpha_2]}, \,\,\,\,\,\, g_2(.,0)=g_1(.,1), \,\,\,\,\,\, \ell^u(g_2(.,1))\leq(1+\eps)\ell^u(g_1(.,1)),$$
and for each $(t,s)\in[0,1]\times[0,1]$ we have
$$d^s(g_2(t,s),g_2(t,0))<(1+\eps)d^s(\gamma^s((1-s)\alpha_1+s\alpha_2),\gamma^s(\alpha_1)).$$
Note that $$\ell^u(g_2(.,1))\leq(1+\eps)\ell^u(g_1(.,1))\leq(1+\eps)^2\ell^u(\gamma^u),$$ 
but to continue the argument, we will ensure that 
$$\ell^u(g_2(.,1))\leq(1+\eps)\ell^u(\gamma^u).$$
This is a crucial step and is exactly what does not allow the holonomies to blow up as described in the introduction.
With the inequality we have, we would have to iterate backwards an additional time to be able to use the local-product-structure again, which would reduce the stable distance we would be able to define the holonomies inside $\gamma^s$ (we would need to use $\lambda^{-2}\delta$ instead of $\lambda^{-1}\delta$).

The equality $g_2(.,0)=g_1(.,1)$ means that the s/u-rectangles $g_1$ and $g_2$ are connected by their unstable boundaries and their union defines a bigger s/u-rectangle $g_{12}\colon[0,1]\times[0,2]\to M$ defined by
$$g_{12}(t,s)=\begin{cases}g_1(t,s), & s\in[0,1]\\
g_2(t,s-1), & s\in[1,2].
\end{cases}$$
This rectangle satisfies:
$$g_{12}(0,.)=\gamma^s_{[0,\alpha_2]}, \,\,\,\,\,\, g_{12}(.,0)=\gamma^u, \,\,\,\,\,\, \text{and}$$
\begin{eqnarray*}
d^s(g_{12}(t,2),g_{12}(t,0))&\leq&d^s(g_{12}(t,2),g_{12}(t,1))+d^s(g_{12}(t,1),g_{12}(t,0))\\
&<&(1+\eps)d^s(\gamma^s(\alpha_2),\gamma^s(\alpha_1))+(1+\eps)d^s(\gamma^s(\alpha_1),\gamma^s(\alpha_0))\\
&=&(1+\eps)\lambda^{-1}\delta+(1+\eps)\delta\\
&=&(1+\eps)\delta(\lambda^{-1}+1)
\end{eqnarray*}
for every $t\in[0,1]$. To estimate the length of the unstable side of $g_{12}$ we argument as follows:
choose $k_1\in\N$ such that $$\lambda^{-k_1}(1+\eps)\delta(\lambda^{-1}+1)<\delta,$$ let $j_1\in\N$ be the smallest positive integer such that $$\ell^u(\gamma^u)\leq j_1\lambda^{-k_1}\delta$$ 
and choose inductively $\beta_i>0$ such that $$\beta_{i+1}>\beta_i \,\,\,\,\,\, \text{and} \,\,\,\,\,\, \ell^u(\gamma^u_{[\beta_i,\beta_{i+1}]})=\lambda^{-k_1}\delta$$
for every $i\in\{1,\dots,j_1-1\}$ ($\beta_{j_1}=1$ and = is changed by $\leq$ in the last term). 
The s/u rectangle $\widetilde{g_{12}}=f^{k_1}\circ g_{12}\colon [0,1]\times[0,2]\to M$ satisfies:
$$\widetilde{g_{12}}(0,.)=f^{k_1}(\gamma^s_{[0,\alpha_2]}), \,\,\,\,\,\, \widetilde{g_{12}}(.,0)=f^{k_1}(\gamma^u),$$
and when restricted to each rectangle $[\beta_i,\beta_{i+1}]\times[0,2]$ it satisfies
\begin{eqnarray*}
d^s(\widetilde{g_{12}}(\beta_i,2),\widetilde{g_{12}}(\beta_i,0))&=&\lambda^{-k_1}d^s(g_{12}(\beta_i,2),g_{12}(\beta_i,0))\\
&\leq&\lambda^{-k_1}(1+\eps)\delta(\lambda^{-1}+1)\\
&<&\delta
\end{eqnarray*}
and
$$\ell^u(\widetilde{g_{12}}([\beta_i,\beta_{i+1}],0))
=\lambda^{k_1}\ell^u(\gamma^u_{[\beta_i,\beta_{i+1}]})=\delta.$$
This ensures the existence of a s/u-rectangle $g^i\colon[\beta_i,\beta_{i+1}]\times[0,2]\to X$ satisfying
$$g^i(\beta_i,.)=\widetilde{g_{12}}(\beta_i,.), \,\,\,\,\,\, g^i(.,0)=\widetilde{g_{12}}(.,0), \,\,\,\,\,\, \text{and}$$
$$\ell^u(g^i(.,2))\leq(1+\eps)\ell^u(g^i(.,0)).$$
The uniqueness of the local-product-structure of $f$ ensures that $\widetilde{g_{12}}$ restricted to $[\beta_i,\beta_{i+1}]\times[0,2]$ and $g^i$ coincide since for each $(t,s)\in[\beta_i,\beta_{i+1}]\times[0,2]$ we have
$$\widetilde{g_{12}}(t,s)=\big{[}f^k(\gamma^u(t)),f^k(\gamma^s(s))\big{]}=g^i(t,s).$$
This implies that
$$\ell^u(\widetilde{g_{12}}([\beta_i,\beta_{i+1}],2))<(1+\eps)\ell^u(\widetilde{g_{12}}([\beta_i,\beta_{i+1}],0)) \,\,\,\,\,\, \text{for every} \,\,\,\,\,\, i\in\{1,\dots,j_1-1\}$$
and, letting $\beta_0=0$, we have
\begin{eqnarray*}
\ell^u(g_2(.,1))=\ell^u(g_{12}(.,2))&=&\sum_{i=0}^{j_1-1}\ell^u(g_{12}([\beta_i,\beta_{i+1}],2)) \\
&=&\lambda^{-k_1}\sum_{i=0}^{j_1-1}\ell^u(\widetilde{g_{12}}([\beta_i,\beta_{i+1}],2))\\
&\leq&\lambda^{-k_1}\sum_{i=0}^{j_1-1}(1+\eps)\ell^u(\widetilde{g_{12}}([\beta_i,\beta_{i+1}],0))\\
&=&(1+\eps)\sum_{i=0}^{j_1-1}\lambda^{-k_1}\ell^u(f^{k_1}(\gamma^u_{[\beta_i,\beta_{i+1}]}))\\
&=&(1+\eps)\sum_{i=0}^{j_1-1}\ell^u(\gamma^u_{[\beta_i,\beta_{i+1}]})\\
&=&(1+\eps)\ell^u(\gamma^u).
\end{eqnarray*}
This proves the inequality we need, is exactly the step where the hypothesis of the length of $\gamma^u$ being well-defined is used, and allows us to repeat the previous argument to construct a s/u-rectangle $g_3\colon[0,1]\times[0,1]\to M$ satisfying 
$$g_3(0,.)=\gamma^s_{[\alpha_2,\alpha_3]}, \,\,\,\,\,\, g_3(.,0)=g_2(.,1),$$ 
and for each $(t,s)\in[0,1]\times[0,1]$ we have
$$d^s(g_3(t,s),g_3(t,0))<(1+\eps)d^s(\gamma^s((1-s)\alpha_2+s\alpha_3),\gamma^s(\alpha_2)),$$
where $\alpha_3>\alpha_2$ is chosen such that $d^s(\gamma^s(\alpha_3),\gamma^s(\alpha_2))=\lambda^{-1}\delta$ and
$$d^s(\gamma^s(\alpha_2),\gamma^s(t))\leq\lambda^{-1}\delta \,\,\,\,\,\, \text{for every} \,\,\,\,\,\, t\in[\alpha_2,\alpha_3].$$
To estimate the length of the unstable side of $g_3$ we argument in the same way as before and consider a bigger s/u-rectangle $g_{13}\colon[0,1]\times[0,3]\to M$ defined by 
$$g_{13}(t,s)=\begin{cases}g_1(t,s), & s\in[0,1]\\
g_2(t,s-1), & s\in[1,2]\\
g_3(t,s-2), & s\in[2,3],
\end{cases}$$
and satisfying 
$$g_{13}(0,.)=\gamma^s_{[0,\alpha_3]}, \,\,\,\,\,\, g_{13}(.,0)=\gamma^u, \,\,\,\,\,\, \text{and}$$
$$\ell^u(g_3(.,1))=\ell^u(g_{13}(.,3))\leq(1+\eps)\ell^u(g_{13}(.,0))=(1+\eps)\ell(\gamma^u).$$
The inequality in the middle is proved, in the same way, by splitting $\gamma^u$ into $j_2$ pieces of length $\lambda^{-k_2}\delta$ for some suitable choice of $k_2$ such that the images of rectangles of the form $[\beta_i,\beta_{i+1}]\times[0,3]$ by $\widetilde{g_{13}}=f^{k_2}\circ g_{13}$ are contained in a local product box. The pseudo-isometric local-product-structure and the conformality of stable/unstable curves ensure that we can iterate $\widetilde{g_{13}}$ backwards by $f^{k_2}$ to obtain the desired inequality.
The hypothesis that the length $\ell^u(\gamma^u)$ is well-defined is necessary to ensure that it equals the sum of the lengths of the pieces $g_{13}([\beta_i,\beta_{i+1}],0)$. Thus, we can repeat the same argument any number of times and 
define a s/u-rectangle $g\colon[0,1]\times[0,n]\to M$ such that $\gamma^s\subset g(0,.)$ and $g(.,0)=\gamma^u$, concluding the proof.
\end{proof}

\begin{remark}
We note that in this proof we do not assume that the length of the stable curve $\gamma^s$ is well-defined. This assumption would make the notation easier but we hope this can be used in more general cases than just the real Anosov diffeomorphism case, where the length of stable curves is well-defined. We also note that in the proof, the image of the holonomy remains with length bounded by $(1+\eps)\ell^u(\gamma^u)$.
\end{remark}

\section{Real Anosov diffeomorphisms}\label{Real}

In this section, we apply Theorem \ref{A} to the case of real Anosov diffeomorphisms, which are Anosov diffeomorphisms $f\colon M\to M$ that admit a continuous decomposition of its tangent bundle into $Df$-invariant one-dimensional sub-spaces
$$E^s=E^s_1\oplus\dots\oplus E^s_k \,\,\,\,\,\, \text{and} \,\,\,\,\,\, E^u=E^u_1\oplus\dots\oplus E^u_l$$ such that there are 
$$0<|a_1|\leq\dots\leq|a_k|<1<|b_1|\leq\dots\leq|b_l|$$ satisfying $$\|Df(x)v\|_{f(x)}=|a_i|.\|v\|_x \,\,\,\,\,\, \text{for every} \,\,\,\,\,\, v\in E^s_i(x) \,\,\,\,\,\, \text{and}$$
$$\|Df^{-1}(x)v\|_{f^{-1}(x)}=|b_j^{-1}|.\|v\|_x \,\,\,\,\,\, \text{for every} \,\,\,\,\,\, v\in E^u_j(x),$$
where we denote by $\|.\|\colon TM\to\R$ the Riemannian metric and by $\|.\|_x$ the norm in $T_xM$.
We present below a conformal hyperbolic distance for this class of Anosov diffeomorphisms (this is based in \cite{A}*{Example 2.7}).
Let
$$\lambda=\max\{|a_i^{-1}|,|b_j|; i\in\{1,\dots,k\}, j\in\{1,\dots,l\}\}>1.$$
For each $(x,v)\in TM$, we can write $v=v^s+v^u$ where $v^s=\sum_{i=1}^kv^s_i$ with $v^s_i\in E^s_i$ and $v^u=\sum_{j=1}^lv^u_j$ with $v^u_j\in E^u_j$, and define $\rho\colon TM\to\R$ by
$$\rho(x,v)=\max\left\{\|v^s_i\|_x^{\frac{\log(\lambda)}{\log(|a_i^{-1}|)}}, \|v^u_j\|_x^{\frac{\log(\lambda)}{\log(|b_j|)}} ; \,\,i\in\{1,\dots,k\}, \,\, j\in\{1,\dots,l\}\right\}.$$

\begin{remark}
We note that $\rho(x,.)$ is not necessarily a norm in $T_xM$: if $v\in E^s_i$ and the respective exponent $$\frac{\log(\lambda)}{\log(|a_i^{-1}|)}>1,$$ then $$\rho(x,tv)=\|tv\|_{x}^{\frac{\log(\lambda)}{\log(|a_i^{-1}|)}}=|t|^{\frac{\log(\lambda)}{\log(|a_i^{-1}|)}}\|v\|_{x}^{\frac{\log(\lambda)}{\log(|a_i^{-1}|)}}\neq|t|\rho(x,v)$$
whenever $|t|\neq1$.
Also,
\begin{eqnarray*}
\rho(x,v)&=&\|v\|_{x}^{\frac{\log(\lambda)}{\log(|a_i^{-1}|)}}\\
&>&\left\|\frac{v}{2}\right\|_{x}^{\frac{\log(\lambda)}{\log(|a_i^{-1}|)}}+\left\|\frac{v}{2}\right\|_{x}^{\frac{\log(\lambda)}{\log(|a_i^{-1}|)}}\\
&=&\rho\left(x,\frac{v}{2}\right)+\rho\left(x,\frac{v}{2}\right).
\end{eqnarray*} Thus, $\rho$ does not satisfy absolute homogeneity and the triangle inequality, but it is non-negative, positive-definite, and symmetric $\rho(x,-v)=\rho(x,v)$.
\end{remark}

This leads to problems in obtaining metric properties by integrating $\rho$ along curves. We circumvent this issue considering the following map:
$$\widehat{\rho}(x, v) = \inf \left\{\mu > 0 ; \rho\left(x, \frac{v}{\mu}\right) \leq 1\right\},$$
which is homogeneous of degree 1, i.e. $\widehat{\rho}(x, \lambda v) = |\lambda| \widehat{\rho}(x, v)$. Indeed, for each $\lambda>0$ and $\mu>0$ we have $$\rho\left(x, \frac{\lambda v}{\lambda\mu}\right) = \rho\left(x, \frac{v}{\mu}\right)$$
which ensures that $\widehat{\rho}(x,\lambda v) = \lambda\widehat{\rho}(x, v)$.
Note that we can consider the case of $\lambda>0$ since $\rho(x,-v)=\rho(x,v)$. The map $\widehat{\rho}$ can be seen as the homogenization of the map $\rho$.
The metric $\|.\|$ defines the following classical Riemannian distance $$d(x,y)=\inf\left\{\int_a^b\|\gamma'(t)\|_{\gamma(t)}dt; \,\,\gamma\colon[a,b]\overset{pwC^1}{\longrightarrow} M, \gamma(a)=x, \gamma(b)=y\right\},$$
where pw$C^1$ means piecewise $C^1$, that is, there exists a partition of $[a,b]$ where the restriction of $\gamma$ to each of these pieces is regular and $C^1$. The integral in the definition of $d$ is called the length of the curve $\gamma$ and is denoted by $\ell(\gamma)$.

If $\gamma\colon[0,1]\to M$ is a pw-$C^1$ curve, then the map $t\to\rho(\gamma(t),\gamma'(t))$ is pw-continuous and, consequently, the map $t\to\widehat{\rho}(\gamma(t),\gamma'(t))$ is pw-continuous and can be integrated on $[0,1]$. Thus, we can define the following map: 
$$d_{\widehat{\rho}}(x,y)=\inf\left\{\int_a^{b}\widehat{\rho}(\gamma(t),\gamma'(t))dt; \,\,\gamma\colon[a,b]{\overset{pwC^1}{\longrightarrow}} M, \gamma(a)=x, \gamma(b)=y\right\},$$ where the infimum is taken over all pw$C^1$ curves $\gamma$ connecting $x$ and $y$. The integral in the definition of $d_{\widehat{\rho}}$ is called the $\widehat{\rho}$-length of the curve $\gamma$ and is denoted by $\ell_{\widehat{\rho}}(\gamma)$. 


\begin{proposition}\label{sametopology}
The map $d_{\widehat{\rho}}$ is a metric that is Lipschitz-equivalent to $d$.
\end{proposition}

\begin{proof}
Since $\widehat{\rho}\colon TM\to\R$ is positive-definite and continuous, the constants $$C_R = \sup_{x \in M, \|v\|_x=1} \widehat{\rho}(x, v) \,\,\,\,\,\, \text{and} \,\,\,\,\,\, C_L = \inf_{x \in M, \|v\|_x=1} \widehat{\rho}(x, v)$$ are positive and finite. We note that
$$C_L \|v\|_x \leq \widehat{\rho}(x, v) \leq C_R \|v\|_x$$
for every $x \in M$ and every $v \in T_x M$. Indeed, for each $v \in T_x M$ with $v \neq 0$, we can write $v = \|v\|_x \cdot v_0$, where $v_0 = v/\|v\|_x$ is on the unit sphere. Since $\widehat{\rho}$ is 1-homogeneous it follows that 
$$\widehat{\rho}(x, v) = \widehat{\rho}(x, \|v\|_x \cdot v_0) = \|v\|_x \cdot \widehat{\rho}(x, v_0) \leq \|v\|_x \cdot C_R$$ and also that
$$\widehat{\rho}(x, v) = \|v\|_x \cdot \widehat{\rho}(x, v_0) \geq \|v\|_x \cdot C_L.$$
Integrating the above inequalities we obtain
$$\int_a^b C_L \|\gamma'(t)\|_{\gamma(t)} dt \leq \int_a^b \widehat{\rho}(\gamma(t), \gamma'(t)) dt \leq \int_a^b C_R \|\gamma'(t)\|_{\gamma(t)} dt$$
and taking the infimum over all pw$C^1$ curves $\gamma$ connecting $x$ to $y$ we obtain $$C_L \cdot d(x,y) \leq d_{\widehat{\rho}}(x,y) \leq C_R \cdot d(x,y).$$ This proves that $d_{\widehat{\rho}}$ and $d$ are Lipschitz-equivalent. Now we prove that $d_{\widehat{\rho}}$ is a metric. The above inequalities ensure that $d_{\widehat{\rho}}$ is non-negative and that $d_{\widehat{\rho}}(x,y)=0$ if, and only if, $x=y$. The symmetry follows by observing that if $\gamma\colon[a,b]\to M$ is a pw$C^1$ curve connecting $x$ and $y$, then $\tilde{\gamma}\colon[a,b]\to M$ defined by $\tilde{\gamma}(t)=\gamma(b+a-t)$ is the reverse curve connecting $y$ and $x$ with tangent vector $\tilde{\gamma}'(t)=-\gamma'(b+a-t)$. Since $$\widehat{\rho}(\tilde{\gamma}(t),\tilde{\gamma}'(t))=\widehat{\rho}(\gamma(b+a-t),-\gamma'(b+a-t))=\widehat{\rho}(\gamma(b+a-t),\gamma'(b+a-t))$$
for every $t\in[a,b]$, it follows that
$$\int_a^{b}\widehat{\rho}(\tilde{\gamma}(t),\tilde{\gamma}'(t))dt=\int_a^{b}\widehat{\rho}(\gamma(t),\gamma'(t))dt$$
and, consequently, that $d_{\widehat{\rho}}(y,x)=d_{\widehat{\rho}}(x,y)$. To prove the triangle inequality, let $x,y,z\in M$ and $\eps>0$. Choose pw$C^1$ curves $\gamma_1\colon[a,b]\to M$ and $\gamma_2\colon[a',b']\to M$ connecting $x$ to $z$ and $z$ to $y$, respectively, such that
$$\int_a^{b}\widehat{\rho}(\gamma_1(t),\gamma_1'(t))dt<d_{\widehat{\rho}}(x,z)+\eps \,\,\,\,\,\, \text{and}$$
$$\int_{a'}^{b'}\widehat{\rho}(\gamma_2(t),\gamma_2'(t))dt<d_{\widehat{\rho}}(z,y)+\eps.$$
Their concatenation $\gamma_1\#\gamma_2\colon[a,b+b'-a']\to M$ defined by
$$\gamma_1\#\gamma_2(t)=\begin{cases}\gamma_1(t), \,\,\, t\in[a,b]\\
\gamma_2(t+a'-b), \,\,\, t\in[b,b+b'-a']\end{cases}$$
is a pw$C^1$ stable curve connecting $x$ to $y$ such that
\begin{eqnarray*}
d_{\widehat{\rho}}(x,y)&\leq& \int_a^{b+b'-a'}\widehat{\rho}(\gamma_1\#\gamma_2(t),\gamma_1\#\gamma_2'(t))dt\\
&=&\int_a^{b}\widehat{\rho}(\gamma_1\#\gamma_2(t),\gamma_1\#\gamma_2'(t))dt+\int_{b}^{b+b'-a'}\widehat{\rho}(\gamma_1\#\gamma_2(t),\gamma_1\#\gamma_2'(t))dt\\
&=&\int_a^{b}\widehat{\rho}(\gamma_1(t),\gamma_1'(t))dt+\int_{b}^{b+b'-a'}\widehat{\rho}(\gamma_2(t+a'-b)),\gamma_2'(t+a'-b)))dt\\
&=&\int_a^{b}\widehat{\rho}(\gamma_1(t),\gamma_1'(t))dt+\int_{a'}^{b'}\widehat{\rho}(\gamma_2(t),\gamma_2'(t))dt\\
&<&d_{\widehat{\rho}}(w,z)+d_{\widehat{\rho}}(z,y)+2\eps.
\end{eqnarray*}
Since this holds for every $\eps>0$, the triangle inequality follows by letting $\eps\to0$ in the last inequality. This finishes the proof.
\end{proof}

Now we prove that the Anosov diffeomorphism $f$ acts in a conformal way on stable/unstable sets using the metric $d_{\widehat{\rho}}$. We actually consider the metrics $d_{\widehat{\rho}}^s$ and $d_{\widehat{\rho}}^u$ defined on stable and unstable sets, respectively, by restricting the curves in the definition of $d_{\widehat{\rho}}$ which are stable and unstable. This is exactly the same relation between the metrics $d$ and its restrictions $d^s$ and $d^u$.

\begin{proposition}\label{drhoself}
The pair $(d_{\widehat{\rho}}^s,d_{\widehat{\rho}}^u)$ is a conformal structure.
\end{proposition}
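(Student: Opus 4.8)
The goal is to verify the two defining conditions of a conformal structure for the pair $(d_\rho^s, d_\rho^u)$. Condition (1), that $d_\rho^s$ restricted to $W^s(x)\times W^s(x)$ and $d_\rho^u$ restricted to $W^u(x)\times W^u(x)$ are metrics, has already been established in Proposition \ref{metricrho}. So the real content is condition (2): producing a constant $\lambda>1$ such that
$$d_\rho^s(f(y),f(x))=\lambda^{-1}d_\rho^s(y,x)\quad\text{and}\quad d_\rho^u(f^{-1}(z),f^{-1}(x))=\lambda^{-1}d_\rho^u(z,x)$$
whenever $y\in W^s(x)$ and $z\in W^u(x)$. The natural candidate is exactly the $\lambda$ already fixed in this section, namely $\lambda=\max\{|a_i^{-1}|,|b_j|\}$.

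\textbf{Key steps.} First I would compute how $\rho$ transforms under $Df$ along a stable curve. If $\gamma\colon[a,b]\to M$ is a pw-$C^1$ stable curve, then $f\circ\gamma$ is again a pw-$C^1$ stable curve, and $(f\circ\gamma)'(t)=Df(\gamma(t))\gamma'(t)$. Writing $\gamma'(t)=\sum_i v^s_i$ with $v^s_i\in E^s_i(\gamma(t))$ (there is no unstable component since $\gamma$ is a stable curve), $Df$-invariance of each $E^s_i$ gives $Df(\gamma(t))\gamma'(t)=\sum_i Df(\gamma(t))v^s_i$ with $Df(\gamma(t))v^s_i\in E^s_i(f(\gamma(t)))$ and $\|Df(\gamma(t))v^s_i\|_{f(\gamma(t))}=|a_i|\,\|v^s_i\|_{\gamma(t)}$. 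Hence the $i$-th term in the definition of $\rho$ at $(f(\gamma(t)),(f\circ\gamma)'(t))$ is
$$\bigl(|a_i|\,\|v^s_i\|_{\gamma(t)}\bigr)^{\frac{\log\lambda}{\log|a_i^{-1}|}}=|a_i|^{\frac{\log\lambda}{\log|a_i^{-1}|}}\,\|v^s_i\|_{\gamma(t)}^{\frac{\log\lambda}{\log|a_i^{-1}|}}.$$
The crucial elementary identity is that $|a_i|^{\frac{\log\lambda}{\log|a_i^{-1}|}}=\lambda^{-1}$ for every $i$: indeed, taking logarithms, $\frac{\log\lambda}{\log|a_i^{-1}|}\cdot\log|a_i|=\frac{\log\lambda}{\log|a_i^{-1}|}\cdot(-\log|a_i^{-1}|)=-\log\lambda$. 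So each term gets multiplied by exactly the same factor $\lambda^{-1}$, and therefore the max is multiplied by $\lambda^{-1}$:
$$\rho\bigl(f(\gamma(t)),(f\circ\gamma)'(t)\bigr)=\lambda^{-1}\rho\bigl(\gamma(t),\gamma'(t)\bigr)\quad\text{for a.e. }t.$$
Integrating over $[a,b]$ gives that the $\rho$-length of $f\circ\gamma$ equals $\lambda^{-1}$ times the $\rho$-length of $\gamma$. Second, I would pass from lengths to the distance $d_\rho^s$: since $\gamma\mapsto f\circ\gamma$ is a bijection between pw-$C^1$ stable curves from $x$ to $y$ and pw-$C^1$ stable curves from $f(x)$ to $f(y)$ (its inverse being $\eta\mapsto f^{-1}\circ\eta$), and it scales every $\rho$-length by the fixed factor $\lambda^{-1}$, taking the infimum over all such curves yields $d_\rho^s(f(x),f(y))=\lambda^{-1}d_\rho^s(x,y)$. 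Third, the unstable statement is handled symmetrically, working with $f^{-1}$, using $\|Df^{-1}(x)v\|=|b_j^{-1}|\,\|v\|$ on $E^u_j$ and the identity $|b_j^{-1}|^{\frac{\log\lambda}{\log|b_j|}}=\lambda^{-1}$ (same logarithmic computation). Finally, combining this scaling property with Proposition \ref{metricrho} gives both defining properties of a conformal structure, completing the proof.

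\textbf{Main obstacle.} The computation itself is routine; the one point deserving care is the interchange of $Df$ with the direct-sum decomposition of the tangent vector $\gamma'(t)$ and the observation that a stable curve has purely stable tangent vectors (so that the unstable terms in $\rho$ play no role and do not interfere with the scaling), together with double-checking that the exponents were chosen precisely so that \emph{every} block scales by the \emph{same} factor $\lambda^{-1}$ — this uniformity across all the $E^s_i$ and $E^u_j$ is exactly what makes the max scale cleanly, and it is the whole reason for the peculiar exponents $\frac{\log\lambda}{\log|a_i^{-1}|}$ in the definition of $\rho$. One should also note that the a.e.\ pointwise identity for $\rho$ along $\gamma$ is enough to conclude the identity for the integrals, since $t\mapsto\rho(\gamma(t),\gamma'(t))$ is piecewise continuous.
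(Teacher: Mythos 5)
Your proposal is correct and follows essentially the same route as the paper: the key identity $|a_i|^{\frac{\log\lambda}{\log|a_i^{-1}|}}=\lambda^{-1}$ (and its unstable analogue), the observation that stable tangent vectors kill the unstable terms in the max, integration along the curve, and passage to the infimum over curves. The only difference is that you make explicit the bijection $\gamma\mapsto f\circ\gamma$ between competitor curves when passing from lengths to distances, which the paper leaves implicit.
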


\begin{proof}
First, note that if $x\in M$ and $v\in T_xM$, then
\begin{eqnarray*}
\rho(f(x),Df(x)v)&=&\underset{j\in\{1,\dots,l\}}{\underset{i\in\{1,\dots,k\}}{\max}}\left\{\|Df(x)v^s_i\|_{f(x)}^{\frac{\log(\lambda)}{\log(|a_i^{-1}|)}}, \|Df(x)v^u_j\|_{f(x)}^{\frac{\log(\lambda)}{\log(|b_j|)}}\right\} \\
&=&\underset{j\in\{1,\dots,l\}}{\underset{i\in\{1,\dots,k\}}{\max}}\left\{(|a_i|\|v^s_i\|_x)^{\frac{\log(\lambda)}{\log(|a_i^{-1}|)}}, (|b_j|\|v^u_j\|_x)^{\frac{\log(\lambda)}{\log(|b_j|)}}\right\} \\
&=&\underset{j\in\{1,\dots,l\}}{\underset{i\in\{1,\dots,k\}}{\max}}\left\{\lambda^{-1}\|v^s_i\|_x^{\frac{\log(\lambda)}{\log(|a_i^{-1}|)}}, \lambda\|v^u_j\|_x^{\frac{\log(\lambda)}{\log(|b_j|)}}\right\}.
\end{eqnarray*}
In particular, if $v\in E^s(x)$, then $v^u_j=0$ for every $j\in\{1,\dots,l\}$ and
$$\rho(f(x),Df(x)v)=\underset{i\in\{1,\dots,k\}}{\max}\left\{\lambda^{-1}\|v^s_i\|_x^{\frac{\log(\lambda)}{\log(|a_i^{-1}|)}}\right\}=\lambda^{-1}\rho(x,v).$$
This implies that
\begin{eqnarray*}
\widehat{\rho}(f(x),Df(x)v)&=&\inf \left\{\mu > 0 ; \rho\left(f(x), \frac{Df(x)v}{\mu}\right) \leq 1\right\}\\
&=&\inf \left\{\mu > 0 ; \lambda^{-1}\rho\left(x, \frac{v}{\mu}\right) \leq 1\right\}\\
&=&\lambda^{-1}\widehat{\rho}(x,v).
\end{eqnarray*}
Also, if $x\in M$ and $v\in T_xM$, then 
\begin{eqnarray*}
\rho(f^{-1}(x),Df^{-1}(x)v)&=&\underset{j\in\{1,\dots,l\}}{\underset{i\in\{1,\dots,k\}}{\max}}\left\{\|Df^{-1}(x)v^s_i\|_{f^{-1}(x)}^{\frac{\log(\lambda)}{\log(|a_i^{-1}|)}}, \|Df^{-1}(x)v^u_j\|_{f^{-1}(x)}^{\frac{\log(\lambda)}{\log(|b_j|)}}\right\} \\
&=&\underset{j\in\{1,\dots,l\}}{\underset{i\in\{1,\dots,k\}}{\max}}\left\{(|a_i^{-1}|\|v^s_i\|_x)^{\frac{\log(\lambda)}{\log(|a_i^{-1}|)}}, (|b_j^{-1}|\|v^u_j\|_x)^{\frac{\log(\lambda)}{\log(|b_j|)}}\right\} \\
&=&\underset{j\in\{1,\dots,l\}}{\underset{i\in\{1,\dots,k\}}{\max}}\left\{\lambda\|v^s_i\|_x^{\frac{\log(\lambda)}{\log(|a_i^{-1}|)}}, \lambda^{-1}\|v^u_j\|_x^{\frac{\log(\lambda)}{\log(|b_j|)}}\right\}.
\end{eqnarray*}
In particular, if $v\in E^u(x)$, then $v^s_i=0$ for every $i\in\{1,\dots,k\}$ and
$$\rho(f^{-1}(x),Df^{-1}(x)v)=\underset{j\in\{1,\dots,l\}}{\max}\left\{\lambda^{-1}\|v^u_j\|_x^{\frac{\log(\lambda)}{\log(|b_j|)}}\right\}=\lambda^{-1}\rho(x,v).$$
This implies that
\begin{eqnarray*}
\widehat{\rho}(f^{-1}(x),Df^{-1}(x)v)&=&\inf \left\{\mu > 0 ; \rho\left(f^{-1}(x), \frac{Df^{-1}(x)v}{\mu}\right) \leq 1\right\}\\
&=&\inf \left\{\mu > 0 ; \lambda^{-1}\rho\left(x, \frac{v}{\mu}\right) \leq 1\right\}\\
&=&\lambda^{-1}\widehat{\rho}(x,v).
\end{eqnarray*}
Now, if $y\in W^s(x)$ and $\gamma\colon[a,b]\to M$ is any pw$C^1$ stable curve connecting $x$ to $y$, then 
\begin{eqnarray*}
\int_a^{b}\widehat{\rho}(f\circ\gamma(t),(f\circ\gamma)'(t))dt&=&\int_a^{b}\widehat{\rho}(f\circ\gamma(t),Df(\gamma(t))\gamma'(t))dt\\
&=&\lambda^{-1}\int_a^{b}\widehat{\rho}(\gamma(t),\gamma'(t))dt,
\end{eqnarray*}
which ensures that
$$d_{\widehat{\rho}}^s(f(x),f(y))=\lambda^{-1} d_{\widehat{\rho}}^s(x,y).$$ 
An analogous argument ensures that if $y\in W^u(x)$, then $$d_{\widehat{\rho}}^u(f^{-1}(x),f^{-1}(y))=\lambda^{-1} d_{\widehat{\rho}}^u(x,y).$$
This proves that $(d_{\widehat{\rho}}^s,d_{\widehat{\rho}}^u)$ is a conformal structure and finishes the proof.
\end{proof}


In the following result, we prove that this conformal structure induces a length structure on pw$C^1$ stable/unstable curves given by the integral of the map $\widehat{\rho}$.

\begin{proposition}
If $\gamma\colon[a,b]\to M$ is a pw$C^1$ stable curve, then its stable $\widehat{\rho}$-length $\ell_{\widehat{\rho}}^s(\gamma)$ is well-defined (as in Definition \ref{length} using the metric $d^s_{\widehat{\rho}}$) and $$\ell_{\widehat{\rho}}^s(\gamma)=\int_a^b\widehat{\rho}(\gamma(t),\gamma'(t))dt.$$ Also, if $\eta\colon[a,b]\to M$ is a pw$C^1$ unstable curve, then its unstable $\widehat{\rho}$-length $\ell_{\widehat{\rho}}^u(\eta)$ is well-defined and $$\ell_{\widehat{\rho}}^u(\eta)=\int_a^b\widehat{\rho}(\eta(t),\eta'(t))dt.$$
\end{proposition}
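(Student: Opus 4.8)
The plan is to show that the metric-space length $\ell^s_\rho(\gamma)$ defined via suprema over partitions (Definition \ref{length}, applied to the conformal structure $(d^s_\rho,d^u_\rho)$ from Proposition \ref{drhoself}) coincides with the Riemannian-type integral $\int_a^b \rho(\gamma(t),\gamma'(t))\,dt$. First I would reduce to the case of a genuinely $C^1$ stable curve $\gamma\colon[a,b]\to M$ that is regular, since a pw$C^1$ curve is a finite concatenation of such pieces and both quantities are additive over concatenation (the integral obviously, and $\ell^s_\rho$ by Proposition \ref{slength}). For such a piece, the two directions of the desired equality are:

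\emph{Upper bound} $\ell^s_\rho(\gamma)\le\int_a^b\rho(\gamma,\gamma')\,dt$: for any partition $P=\{a=a_0<\dots<a_n=b\}$ and each subinterval, the definition of $d^s_\rho$ as an infimum over pw$C^1$ stable curves — taking the competitor $\gamma|_{[a_i,a_{i+1}]}$ itself — gives $d^s_\rho(\gamma(a_i),\gamma(a_{i+1}))\le\int_{a_i}^{a_{i+1}}\rho(\gamma,\gamma')\,dt$, and summing yields $\ell^s_\rho(\gamma,P)\le\int_a^b\rho(\gamma,\gamma')\,dt$; take the supremum over $P$.

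\emph{Lower bound} $\ell^s_\rho(\gamma)\ge\int_a^b\rho(\gamma,\gamma')\,dt$: this is the Riemann-sum direction. Working in a finite cover of the compact image $\gamma([a,b])$ by product boxes / coordinate charts, and using continuity of $x\mapsto\rho(x,\cdot)$ on each one-dimensional stable subbundle together with uniform continuity of $t\mapsto\gamma'(t)$, I would argue that for a fine enough partition $P$ each term $d^s_\rho(\gamma(a_i),\gamma(a_{i+1}))$ is at least $(1-\eta)\int_{a_i}^{a_{i+1}}\rho(\gamma,\gamma')\,dt$ — heuristically, on a short stable segment any competing stable curve joining the two endpoints must run ``nearly parallel'' to $\gamma'$ (the stable subbundle being effectively one-dimensional along the leaf at small scales, up to the Hölder splitting), so its $\rho$-integral cannot beat that of $\gamma$ by more than an $\eta$-factor; alternatively, and more cleanly, I would first establish the local statement $d^s_\rho(x,y)=\int\rho(\eta,\eta')\,dt$ where $\eta$ is \emph{the} local stable curve joining nearby $x,y$ (local stable leaves are one-dimensional $C^1$ arcs, so there is essentially a unique regular stable curve up to reparametrization between two close points on a leaf, and $\rho$ is invariant under orientation reversal and reparametrization by the chain rule). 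Granting that local identity, the Riemann sum $\sum_i\int_{a_i}^{a_{i+1}}\rho(\gamma,\gamma')\,dt$ telescopes to the full integral, giving $\ell^s_\rho(\gamma,P)=\int_a^b\rho(\gamma,\gamma')\,dt$ for any sufficiently fine $P$ containing enough points, hence the supremum is $\int_a^b\rho(\gamma,\gamma')\,dt$ as well. Finiteness and positivity ($0<\ell^s_\rho(\gamma)<\infty$) then follow since $\rho$ is continuous and positive on the compact set of unit vectors in the stable bundle, so the integrand is bounded above and bounded below by a positive constant times $\|\gamma'\|$.

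The unstable case is identical after replacing $d^s_\rho$, $W^s$, and stable curves by their unstable counterparts, using the corresponding half of Proposition \ref{drhoself} and Proposition \ref{lengthconformal}. I expect the main obstacle to be the lower bound, specifically making rigorous that a short stable curve joining two nearby points on the same local stable leaf cannot have $\rho$-length much smaller than that of the given arc $\gamma$; the clean way around it is the uniqueness-of-local-stable-arc observation (local stable leaves are embedded one-dimensional $C^1$ submanifolds, so any stable curve between two close points on one of them is a reparametrization, possibly with backtracking, of the canonical local stable arc, and backtracking only increases the $\rho$-integral since $\rho\ge 0$), which reduces the infimum defining $d^s_\rho$ on a small scale to a single curve and makes the Riemann-sum computation routine.
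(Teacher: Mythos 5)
Your reduction to regular $C^1$ pieces, your upper bound $\ell^s_\rho(\gamma,P)\le\int_a^b\rho(\gamma,\gamma')\,dt$ (obtained by taking $\gamma|_{[a_i,a_{i+1}]}$ itself as a competitor in the infimum defining $d^s_\rho$), and the finiteness/positivity remarks all agree with the paper. The gap is in the lower bound, and it lies precisely in the premise you use to make it ``clean'': local stable leaves of a real Anosov diffeomorphism are \emph{not} one-dimensional arcs. The stable bundle is $E^s=E^s_1\oplus\dots\oplus E^s_k$ with $k$ arbitrary --- only the invariant subbundles $E^s_i$ are one-dimensional --- so the local stable manifold through a point is a $k$-dimensional disc. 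Between two nearby points of the same leaf there are many pw$C^1$ stable curves that are not reparametrizations of one another, and the infimum defining $d^s_\rho$ is a genuine infimum over all of them. Hence the ``uniqueness of the local stable arc up to reparametrization and backtracking'' reduction collapses, and so does the heuristic that a competitor ``must run nearly parallel to $\gamma'$'' (it can leave in a direction transverse to $\gamma'$ inside the leaf). Note also that, since $\rho$ fails absolute homogeneity and the triangle inequality (as observed in the paper right after its definition), even in the genuinely one-dimensional case the claim that backtracking or reparametrization cannot decrease $\int\rho(\beta,\beta')$ is not the routine change-of-variables statement it would be for a norm.

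The paper's lower bound avoids any dimensional reduction: using uniform continuity of $t\mapsto\rho(\gamma(t),\gamma'(t))$ on each $C^1$ piece, it establishes that for every $\eps>0$ there is $\delta>0$ such that
$$\left|\int_r^s\rho(\gamma(t),\gamma'(t))\,dt-d^s_\rho(\gamma(r),\gamma(s))\right|<\frac{\eps}{b-a}(s-r)$$
whenever $0<s-r<\delta$ and $r,s$ lie in the same smoothness interval, by bounding $\int_r^s\rho(\beta(t),\beta'(t))\,dt$ from below by $\bigl(\rho(\gamma(r),\gamma'(r))-\tfrac{\eps}{2(b-a)}\bigr)(s-r)$ for competing stable curves $\beta$; summing over a fine partition refining the smoothness partition then yields $\ell^s_\rho(\gamma,P)\ge\int_a^b\rho(\gamma,\gamma')\,dt-\eps$. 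To repair your argument you would need a lower bound of this kind on $d^s_\rho(\gamma(r),\gamma(s))$ valid for arbitrary competitors inside a $k$-dimensional leaf, not a uniqueness statement for the connecting curve.
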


\begin{proof}
If $P=\{a=a_0<a_1<\dots<a_{n-1}<a_n=b\}$ is a partition of $[a,b]$, then
$$\ell_{\widehat{\rho}}^s(\gamma,P):=\sum_{i=0}^nd_{\widehat{\rho}}^s(\gamma(a_{i+1}),\gamma(a_i))\leq\int_a^b\widehat{\rho}(\gamma(t),\gamma'(t))dt<+\infty.$$ Since this integral does not depend on $P$, it follows that $\ell_{\widehat{\rho}}^s(\gamma)<+\infty$. Also, since $\widehat{\rho}$ is positive-definite, it follows that $\ell_{\widehat{\rho}}^s(\gamma)>0$, which implies that $\ell_{\widehat{\rho}}^s(\gamma)$ is well-defined. The same holds for the unstable $\widehat{\rho}$-length $\ell^u_{\widehat{\rho}}(\eta)$ of pw$C^1$ unstable curves. 

Now we prove the equalities in the statement. First, we use that $\gamma$ is pw$C^1$ to choose a partition $Q=\{a=b_0<b_1<\dots<b_{n-1}<b_n=b\}$ where $\gamma$ is $C^1$ on each interval $[b_i,b_{i+1}]$. We prove that for each $\eps>0$, we can choose $\delta\in(0,\eps)$ such that if $0<s-r<\delta$ and $s,r\in[b_i,b_{i+1}]$, then $$\left|\int_r^s \widehat{\rho}(\gamma(t),\gamma'(t))dt  - d_{\widehat{\rho}}^s(\gamma(r),\gamma(s))\right|<\eps.$$
We prove this by contradiction, assuming the existence of $\eps>0$ such that for each $n\in\N$ there exist $0<s_n-r_n<\frac{1}{n}$ with $s_n,r_n\in[b_i,b_{i+1}]$ such that
$$\left|\int_{r_n}^{s_n} \widehat{\rho}(\gamma(t),\gamma'(t))dt  - d_{\widehat{\rho}}^s(\gamma(r_n),\gamma(s_n))\right|\geq\eps.$$
But since the map $t\to\widehat{\rho}(\gamma(t),\gamma'(t))$ is continuous on $[b_i,b_{i+1}]$ and $d_{\widehat{\rho}}^s$ is a metric, it follows that $$\int_{r_n}^{s_n} \widehat{\rho}(\gamma(t),\gamma'(t))dt\to0 \,\,\,\,\,\, \text{and} \,\,\,\,\,\, d_{\widehat{\rho}}^s(\gamma(r_n),\gamma(s_n))\to0$$
when $n\to\infty$, contradicting the above inequality. This gives us a control in the above difference on each small interval of a partition of $[a,b]$ into $n$ intervals, but when we sum over all intervals of the partition, the difference will be less than $n\eps$. We need a global control on the interval $[a,b]$ which can be obtained proving the following: for each $\eps>0$, we can choose $\delta\in(0,\eps)$ such that if $0<s-r<\delta$ and $s,r\in[b_i,b_{i+1}]$, then $$\left|\int_r^s \widehat{\rho}(\gamma(t),\gamma'(t))dt  - d_{\widehat{\rho}}^s(\gamma(r),\gamma(s))\right|<\frac{\eps}{b-a}(s-r).$$ This ensures that if $P=\{a=a_0<a_1<\dots<a_{n-1}<a_n=b\}$ is a partition of $[a,b]$ refining $Q$ such that $|a_{i+1}-a_i|<\delta$ for every $i\in\{0,\dots,n-1\}$, then \begin{eqnarray*}
\ell_{\widehat{\rho}}^s(\gamma,P)&=&
\sum_{i=0}^{n-1}d_{\widehat{\rho}}^s(\gamma(a_{i+1}),\gamma(a_i))\\
&\geq& \sum_{i=0}^{n-1}\int_{a_i}^{a_{i+1}}\widehat{\rho}(\gamma(t),\gamma'(t))dt -\frac{\eps}{b-a}\sum_{i=0}^{n-1}(a_{i+1}-a_i)\\
&=&\int_a^b\widehat{\rho}(\gamma(t),\gamma'(t))dt-\eps
\end{eqnarray*}
ensuring that $\ell^s_{\widehat{\rho}}(\gamma)=\int_a^b\widehat{\rho}(\gamma(t),\gamma'(t))dt$. To prove the desired inequality, we proceed with the following estimatives. 
First, we choose $\delta\in(0,\eps)$ such that if $0<s-r<\delta$ and $s,r\in[b_i,b_{i+1}]$, then $$|\widehat{\rho}(\gamma(t),\gamma'(t))-\widehat{\rho}(\gamma(r),\gamma'(r))|\leq\frac{\eps}{2(b-a)} \,\,\,\,\,\, \text{for every} \,\,\,\,\,\, t\in[r,s].$$ Thus,
$$\int_r^s \widehat{\rho}(\gamma(t),\gamma'(t))dt\leq \left(\widehat{\rho}(\gamma(r),\gamma'(r))+\frac{\eps}{2(b-a)}\right)(s-r).$$
Now, we estimate $d_{\widehat{\rho}}^s(\gamma(r),\gamma(s))$ from below as follows. 
Note that if $\beta\colon[r,s]\to M$ is a pw$C^1$ stable curve with $\beta(r)=\gamma(r)$, $\beta(s)=\gamma(s)$, and $$|\widehat{\rho}(\beta(t),\beta'(t))-\widehat{\rho}(\gamma(r),\gamma'(r))|\leq \frac{\eps}{2(b-a)} \,\,\,\,\,\, \text{for every} \,\,\,\,\,\, t\in[r,s],$$ then
\begin{eqnarray*}\int_r^s \widehat{\rho}(\beta(t),\beta'(t))dt&\geq& \left(
\widehat{\rho}(\gamma(r),\gamma'(r))-\frac{\eps}{2(b-a)}\right)(s-r)\\
&\geq& \int_r^s \widehat{\rho}(\gamma(t),\gamma'(t))dt-\frac{\eps}{(b-a)}(s-r).
\end{eqnarray*}
Since this inequality holds for any $\beta$ as above, it follows that
$$d^s_{\widehat{\rho}}(\gamma(r),\gamma(s))\geq\int_r^s \widehat{\rho}(\gamma(t),\gamma'(t))dt-\frac{\eps}{(b-a)}(s-r).$$
The above inequalities imply that
$$\left|\int_r^s \widehat{\rho}(\gamma(t),\gamma'(t))dt  - d_{\widehat{\rho}}^s(\gamma(r),\gamma(s))\right|<\frac{\eps}{b-a}(s-r).$$
This proves the equality in the stable case and a similar argument for the unstable case finishes the proof.
\end{proof}

Now we explain how to use the conformal structure $(d_{\widehat{\rho}}^s,d_{\widehat{\rho}}^u)$ to define a conformal hyperbolic distance $d_{\rho}$ on $M$.
For each pair of points $(x,y)$ in $M$, we can consider a pw$C^1$ curve connecting $x$ and $y$ that is composed of alternating stable/unstable segments, these curves will be called s/u-curves. More precisely, $\gamma\colon[a,b]\to M$ is a \emph{s/u-curve} if there is a partition $a=t_0<t_1<\dots<t_{k-1}<t_k=b$ of $[a,b]$ such that $\gamma_{|[t_{i-1},t_{i}]}$ is either stable or unstable for every $i\in\{1,\dots,k\}$, and if $\gamma_{|[t_{i-1},t_{i}]}$ is stable (unstable), then $\gamma_{|[t_{i},t_{i+1}]}$ is unstable (stable) for every $i\in\{1,\dots,k-1\}$. For each s/u-curve $\gamma$, let $$\ell_{\rho}(\gamma)=\sum_{i=0}^{k-1}\ell^{\sigma(i)}_{\widehat{\rho}}(\gamma_{|[t_i,t_{i+1}]}),$$
where $\sigma(i)=s$ if $\gamma_{|[t_{i},t_{i+1}]}$ is stable and $\sigma(i)=u$ if $\gamma_{|[t_{i},t_{i+1}]}$ is unstable. Note that if $\gamma$ is a stable (unstable) curve, then $\ell_{\rho}(\gamma)=\ell^s_{\widehat{\rho}}(\gamma)$ ($\ell_{\rho}(\gamma)=\ell^u_{\widehat{\rho}}(\gamma)$). Now define $d_{\rho}\colon X\times X\to\R^+$ by
$$d_{\rho}(x,y)=\inf\{\ell_{\rho}(\gamma); \,\, \gamma\colon[a,b]\to X \,\,\text{is a s/u-curve}, \,\, \gamma(a)=x, \gamma(b)=y\}.$$
The fact that $d_{\rho}$ is a metric follows from calculations similar to the proof that  $d_{\widehat{\rho}}^s$ and $d_{\widehat{\rho}}^u$ are metrics. 


\begin{proposition}
There exists $\eps>0$ such that if $y\in W^s_{\eps}(x)$, then $d_{\rho}(x,y)=d_{\widehat{\rho}}^s(x,y)$. Similarly, if $y\in W^u_{\eps}(x)$, then $d_{\rho}(x,y)=d_{\widehat{\rho}}^u(x,y)$. In particular, $d_{\rho}$ is a conformal hyperbolic distance.
\end{proposition}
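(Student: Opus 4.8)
The plan is to prove the two displayed equalities locally, from which the final assertion is immediate. The inequality $d_{\rho}(x,y)\le d_{\rho}^{s}(x,y)$ for $y\in W^{s}(x)$ is free, since a stable curve is an s/u-curve and $\ell_{\rho}$ restricts to $\ell_{\rho}^{s}$ on it; the content is the reverse inequality near the diagonal. I would fix the expansive constant $\xi$ and the constant $\delta_{0}$ of the local product structure, take $\eps\in(0,\delta_{0})$ small, and let $y\in W^{s}_{\eps}(x)$. Since $d_{\rho}^{s}(x,y)$ is then small (bounded above in terms of $\eps$ by Proposition \ref{sametopology} and compactness of $M$), it suffices to show $\ell_{\rho}(\gamma)\ge d_{\rho}^{s}(x,y)$ for every s/u-curve $\gamma$ from $x$ to $y$; and since an s/u-curve with $\ell_{\rho}(\gamma)$ below a fixed small threshold cannot leave the product box $C_{\xi}(x)$ (again by Proposition \ref{sametopology}, compactness, and the additivity of $\ell_{\rho}$ over the pieces of $\gamma$), it suffices to treat local s/u-curves $\gamma\subseteq C_{\xi}(x)$.

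Write such a $\gamma=\sigma_{1}\ast\tau_{1}\ast\dots\ast\tau_{m}\ast\sigma_{m+1}$ with $\sigma_{i}$ stable and $\tau_{j}$ unstable, and consider the projection along unstable leaves $\pi\colon C_{\xi}(x)\to W^{s}_{\xi}(x)$, $\pi(w)=[x,w]=W^{s}_{\xi}(x)\cap W^{u}_{\xi}(w)$, which is continuous by expansiveness and the local product structure and fixes $W^{s}_{\xi}(x)$ pointwise. Then $\pi\circ\gamma$ is a stable curve from $\pi(x)=x$ to $\pi(y)=y$ (here one uses $y\in W^{s}_{\xi}(x)$), each $\pi\circ\tau_{j}$ is constant (an unstable plaque is collapsed by $\pi$ to one point), and each $\pi\circ\sigma_{i}$ is exactly the image of $\sigma_{i}$ under the unstable holonomy onto $W^{s}_{\xi}(x)$. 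The crucial point is that this holonomy preserves the conformal stable length: it is pseudo-isometric as in Proposition \ref{confpseudo} (whose argument uses only the conformal contraction of the structure, established in Proposition \ref{drhoself}, and the local product structure, so it applies to $(d_{\rho}^{s},d_{\rho}^{u})$), and a pseudo-isometry preserves the induced length structure — for a rectifiable curve and partitions of mesh below $\delta(\varepsilon')$ the relative distortion of each subsegment is below $\varepsilon'$, so letting $\varepsilon'\to 0$ the partition sums for $\sigma_{i}$ and $\pi\circ\sigma_{i}$ have the same supremum. Hence, using Proposition \ref{slength},
$$\ell_{\rho}^{s}(\pi\circ\gamma)=\sum_{i=1}^{m+1}\ell_{\rho}^{s}(\pi\circ\sigma_{i})=\sum_{i=1}^{m+1}\ell_{\rho}^{s}(\sigma_{i})\le\ell_{\rho}(\gamma),$$
and the triangle inequality for $d_{\rho}^{s}$ along $\pi\circ\gamma$ gives $d_{\rho}^{s}(x,y)\le\ell_{\rho}^{s}(\pi\circ\gamma)\le\ell_{\rho}(\gamma)$. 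Taking the infimum over all s/u-curves $\gamma$ yields $d_{\rho}^{s}(x,y)\le d_{\rho}(x,y)$, hence equality; the unstable statement is symmetric, projecting along stable leaves onto $W^{u}_{\xi}(x)$.

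To conclude that $d_{\rho}$ is a conformal hyperbolic distance I would set $\xi'=\min\{\xi,\eps\}$: for $y\in W^{s}_{\xi'}(x)$ and $k\ge 0$ one has $f^{k}y\in W^{s}_{\xi'}(f^{k}x)$, so by the equality just proved and Proposition \ref{drhoself},
$$d_{\rho}(f^{k}y,f^{k}x)=d_{\rho}^{s}(f^{k}y,f^{k}x)=\lambda^{-k}d_{\rho}^{s}(y,x)=\lambda^{-k}d_{\rho}(y,x),$$
and likewise on unstable sets; that $d_{\rho}$ generates the topology of $M$ follows from Proposition \ref{sametopology} and the local product structure.

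The step I expect to be the main obstacle is the assertion that the unstable (resp.\ stable) holonomies preserve $\ell_{\rho}^{s}$ (resp.\ $\ell_{\rho}^{u}$). One must avoid invoking Proposition \ref{confpseudo} with the distance $d_{\rho}$ itself, which is not yet known to be a conformal hyperbolic distance; instead one has to rerun the estimates of that proposition for the abstract conformal structure $(d_{\rho}^{s},d_{\rho}^{u})$, using a genuine compatible metric on $M$ only for the box geometry, or to argue by rescaling via $f$. A secondary technical point is the localisation step — checking that an s/u-curve of small $\ell_{\rho}$-length cannot drift out of $C_{\xi}(x)$ — which relies on the uniform comparison of $d_{\rho}^{s},d_{\rho}^{u}$ with a Riemannian metric coming from Proposition \ref{sametopology} together with compactness of $M$.
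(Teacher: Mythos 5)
Your overall architecture (the inequality $d_{\rho}\le d_{\rho}^{s}$ for free, then a projection of each s/u-curve onto $W^{s}(x)$ whose $\ell_{\rho}^{s}$-length is dominated by $\ell_{\rho}(\gamma)$) is the same as the paper's, but the projection you choose is different, and that is where the argument breaks. The paper projects with the \emph{smooth} tubular-neighborhood projection $\pi^{s}_{x}\colon U^{s}_{x}\to W^{s}_{\eps_{0}}(x)$, whose derivative at $p$ is the linear projection of $T_{p}M=E^{s}_{p}\oplus E^{u}_{p}$ onto the stable factor, so that $\rho(\pi^{s}_{x}(p),D\pi^{s}_{x}(p)v)\le\rho(p,v)$ pointwise; integrating this along a pw$C^{1}$ s/u-curve gives $\ell_{\rho}^{s}(\pi^{s}_{x}\circ\gamma)\le\ell_{\rho}(\gamma)$ directly, with no holonomy maps and no pseudo-isometry. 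You instead project with the topological map $w\mapsto[x,w]$, which collapses the unstable legs to points and sends each stable leg to its image under the unstable holonomy, and you then need the key claim that this holonomy preserves (or barely distorts) $\ell_{\rho}^{s}$. That claim is precisely the delicate point, and the justification you offer for it does not close up.

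Concretely: you propose to obtain the pseudo-isometry by rerunning Proposition \ref{confpseudo} for the abstract conformal structure $(d_{\rho}^{s},d_{\rho}^{u})$. But that proof needs, for one and the same metric, both the global triangle inequality $|d(A,B)-d(A',B')|\le d(A,A')+d(B,B')$ with $A,A'$ in a common unstable leaf and $A,B$ in a common stable leaf, and conformal contraction of that metric on local stable and local unstable sets. A metric with both properties is exactly a conformal hyperbolic distance, which is what the proposition is trying to establish for $d_{\rho}$; for the pair $(d_{\rho}^{s},d_{\rho}^{u})$ alone the quantity $d_{\rho}^{s}(A,A')$ is not even defined, and the only global candidate, $d_{\rho}$, is known to be conformal on local stable sets only after one knows $d_{\rho}=d_{\rho}^{s}$ there — the statement under proof. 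Substituting a compatible Riemannian metric ``for the box geometry'' does not rescue this, because the triangle-inequality step and the $\lambda^{\pm m}$ rescaling step must be carried out in the same metric. A secondary problem with your route is that $\pi\circ\sigma_{i}$ is in general only H\"older, so its $\ell_{\rho}^{s}$-length cannot be computed as an integral and its finiteness is not a priori clear, whereas the paper's smooth projection keeps every curve pw$C^{1}$. You correctly identify this as the main obstacle, but the proposal does not overcome it; the fix is to replace the holonomy projection by the tubular-neighborhood projection and the pointwise derivative estimate on $\rho$. (Your localisation step — that s/u-curves of small $\ell_{\rho}$-length stay in the product box — is sound and is actually handled more carefully than in the paper, which only considers curves already contained in $C_{\eps}(x)$.)
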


\begin{proof}
The inequality $d_{\rho}(x,y)\leq d_{\rho}^s(x,y)$ is clearly satisfied for every $y\in W^s(x)$ and $x\in M$ since stable curves are, in particular, s/u-curves. To prove the equality on local stable/unstable manifolds, we first choose a small $\eps_0>0$ such that $W^s_{\eps_0}(x)$ and $W^u_{\eps_0}(x)$ are immersed submanifolds for every $x\in M$. Let $U_x^s, U^u_x\subset M$ be tubular neighborhoods around $W^s_{\eps_0}(x)$ and $W^u_{\eps_0}(x)$, respectively, and $\eps\in(0,\eps_0)$ be such that $C_{\eps}(x)\subset U_x^s\cap U^u_x$ for every $x\in M$, where $C_{\eps}(x)$ is the local product box centered at $x$ and radius $\eps$. Consider the natural projections $\pi^s_x\colon U^s_x\to W^s_{\eps_0}(x)$ and $\pi^u_x\colon U^u_x\to W^u_{\eps_0}(x)$. Note that the derivative of these projections are projections from $T_pM=\mathbb{E}^s_p\oplus\mathbb{E}^u_p$ to $\mathbb{E}^s_{\pi^s_x(p)}$ and $\mathbb{E}^u_{\pi^u_x(p)}$, respectively, and satisfy:
$$\|D\pi^s_x(p)v\|_{\pi^s_x(p)} \leq\|v\|_p\,\,\,\,\,\, \text{and}\,\,\,\,\,\, \|D\pi^u_x(p)v\|_{\pi^u_x(p)}\leq\|v\|_p.$$
These inequalities and the definition of $\rho$ ensure that
$$\rho(\pi^s_x(p),D\pi^s_x(p)v)\leq\rho(p,v) \,\,\,\,\,\, \text{and}\,\,\,\,\,\, \rho(\pi^u_x(p),D\pi^u_x(p)v)\leq\rho(p,v).$$
Similar inequalities also hold for the map $\widehat{\rho}$.
Now let $y\in W^s_{\eps}(x)$ and $\gamma\colon[a,b]\to M$ be an s/u-curve connecting $x$ and $y$ contained in $C_{\eps}(x)$. Consider the curve $\gamma_s=\pi^s_x\circ\gamma$, which is a stable curve connecting $x$ and $y$ contained in $W^s_{\eps_0}(x)$, and note that
\begin{eqnarray*}
\ell_{\rho}(\gamma)&=&\sum_{i=0}^{k-1}\ell^{\sigma(i)}_{\widehat{\rho}}(\gamma_{|[t_i,t_{i+1}]})\\
&=&\sum_{i=0}^{k-1}\int_{t_i}^{t_{i+1}}\widehat{\rho}(\gamma(t),\gamma'(t))dt\\
&\geq&\sum_{i=0}^{k-1}\int_{t_i}^{t_{i+1}}\widehat{\rho}(\pi^{s}_{x}(\gamma(t)),D\pi^{s}_{x}(\gamma(t))\gamma'(t))dt\\
&=&\int_{a}^{b}\widehat{\rho}(\pi^{s}_{x}(\gamma(t)),D\pi^{s}_{x}(\gamma(t))\gamma'(t))dt\\
&=&\ell_{\widehat{\rho}}^s(\gamma_s).
\end{eqnarray*}
This ensures that the infimum taken over all stable curves connecting $x$ and $y$ and the infimum taken over all s/u-curves connecting $x$ and $y$ are equal, that is, $d_{\rho}(x,y)=d_{\widehat{\rho}}^s(x,y)$. A similar calculation proves the unstable equality. The fact that $d_{\rho}$ is a conformal hyperbolic distance follows from $(d^s_{\widehat{\rho}},d^u_{\widehat{\rho}})$ being a conformal structure (see Proposition \ref{drhoself}).
\end{proof}



\begin{proof}[Proof of Theorem \ref{thmreal}]
Since $d_{\rho}$ is a conformal hyperbolic distance such that every pw$C^1$ stable/unstable curve has a well-defined length, it follows from Theorem \ref{A} that every pw$C^1$ stable/unstable curve has a globally defined holonomy. This implies that stable/unstable holonomies can be infinitely extended from where transitivity follows. Indeed, it can be proved that $W^s(x)\cap W^u(y)\neq\emptyset$ for every $x,y\in X$ (as noted in \cite{B}*{Remark 4}) and this implies that the whole space is a single transitive basic set (see \cite{S}*{Theorem 6.2}).
\end{proof}

\vspace{+0.4cm}

\hspace{-0.45cm}\textbf{Acknowledgments.}
Bernardo Carvalho was supported by Progetto di Eccellenza MatMod@TOV grant number CUP E83C23000330006, Prin 2022 (PRIN 2017S35EHN), and by CNPq project number
446192/2024. The author is in debt with Professors Alfonso Artigue and Carlangelo Liverani for discussions where many of the steps of the proofs contained in this article were understood.

\vspace{1.0cm}
\noindent

{\em B. Carvalho}
\vspace{0.2cm}

\noindent

Dipartimento di Matematica,

Università degli Studi di Roma Tor Vergata

Via Cracovia n.50 - 00133

Roma - RM, Italy
\vspace{0.2cm}

\email{mldbnr01@uniroma2.it}
\vspace{0.2cm}

\noindent

National Laboratory for Scientific Computing – LNCC/MCTI 

Av. Getúlio Vargas 333, CEP 25651-070, 

Petrópolis – RJ, Brazil

\vspace{0.2cm}

\email{bmcarvalho@lncc.br}
\vspace{1.5cm}

\end{document}